\documentclass[12pt,a4paper]{amsart}
\usepackage[top=1.15in, bottom=1.15in, left=1.17in, right=1.17in]{geometry}
\usepackage{graphicx}
\usepackage{amssymb}
\usepackage{epstopdf}
\usepackage{pdflscape}
\usepackage{amscd}
\usepackage{dsfont}
\usepackage{fancyhdr}
\usepackage{xypic}
\usepackage{color}

                                % Activate to display a given date or no date

\newcommand{\Cc}{\mathcal{C}}
\newcommand{\ji}{{J(\mathcal{L})}}

\newcommand{\one}{{1\hskip-2.5pt\hbox{\rm I}}}

\newcommand{\ra}{\rightarrow}
\newcommand{\ve}{\varepsilon}
\newcommand{\vp}{\varphi}

\newcommand{\mi}{\mathbf{I}}
\newcommand{\ml}{\mathcal{L}}

\newcommand{\mc}{\mathbb{C}}

\newcommand{\FFLV}{{\operatorname*{FFLV}}}
\newcommand{\Gr}{{\operatorname*{Gr}}}
\newcommand{\SL}{{\operatorname*{SL}}}
\newcommand{\spec}{{\operatorname*{Spec}}}
\newcommand{\heit}{{\operatorname*{ht}}}
\newcommand{\gr}{{\operatorname*{gr}}}

\newcommand{\mSpec}{{\operatorname*{maxSpec}}}
\newcommand{\wwt}{{\operatorname*{wt}}}

\newtheorem{theorem}{Theorem}[section]
\newtheorem{corollary}[theorem]{Corollary}
\newtheorem{lemma}[theorem]{Lemma}
\newtheorem{proposition}[theorem]{Proposition}

\newtheorem{example}[theorem]{Example}
\newtheorem{definition}[theorem]{Definition}
\newtheorem{remark}[theorem]{Remark}

\title[Newton-Okounkov bodies and semi-toric degenerations]{From standard monomial theory to semi-toric degenerations via Newton-Okounkov bodies}
\author{Xin Fang and Peter Littelmann}
\address{\newline Mathematisches Institut, Universit\"at zu K\"oln, 50931, Cologne, Germany} 
\email{xfang@math.uni-koeln.de}
\address{\newline Mathematisches Institut, Universit\"at zu K\"oln, 50931, Cologne, Germany} 
\email{peter.littelmann@math.uni-koeln.de}
\dedicatory{Dedicated to Ernest Vinberg on the occasion of his 80th birthday}
\keywords{Distributive lattice, Hibi variety, standard monomial theory, toric degeneration, Newton-Okounkov body, Grassmann variety}
\subjclass[2010]{14M15(primary), and 14M25, 52B20(secondary)} 
\begin{document}
\maketitle

\begin{abstract}
The Hodge algebra structures on the homogeneous coordinate rings of Grassmann varieties provide semi-toric degenerations of these varieties. In this paper we construct these semi-toric degenerations using quasi-valuations and triangulations of Newton-Okounkov bodies.
\end{abstract}

\section{Introduction}
The basic idea of this paper is to test out in the simplest (but nontrivial) case - the Grassmann variety - how to combine ideas from standard monomial theory and associated semi-toric degenerations  \cite{Chi,DEP, S2} together with the theory of 
Newton-Okounkov bodies \cite{KK} and its associated toric degenerations \cite{A}.
\par
The study of flat degenerations of partial flag varieties started essentially with the work of Hodge \cite{Hodge}. 
There are in general two parallel directions in the study of these degenerations: the special fibre is a toric variety 
or a (reduced) union of toric varieties.
\par
In the first situation, many important developments in representation theory and discrete geometry, such as canonical bases, 
cluster algebras and the theory of Newton-Okounkov bodies, are applied to provide new insights in constructing different toric degenerations,
see  \cite{A,AB,Ca1,FaFL1,GHKK,GL,K1}, for details on the (incomplete) history, see for example \cite{FaFL2}. 
\par
Flat toric degenerations whose special fibres are no longer irreducible but a union of toric varieties are called semi-toric degenerations. 
The quest for semi-toric degenerations arises naturally for example in case one is looking for degenerations which are compatible with 
certain prescribed subvarieties: a typical example for such a situation are Schubert varieties in a Grassmann variety 
(a nice argument why in this example one needs semi-toric degenerations can be found in \cite{Ca1}). Semi-toric
degenerations occur naturally in the work of De Concini, Eisenbud and Procesi \cite{DEP} on Hodge algebras. 
In the case of partial flag varieties, such degenerations are constructed by Chiriv\`{i} \cite{Chi} using Lakshimibai-Seshadri (LS) 
algebra structures arising from the study of standard monomial theory of partial flag varieties \cite{L}.
\par
We strongly believe that the theory of standard monomials is connected to the theory of Newton-Okounkov bodies via triangulations
of the bodies. To make this vague statement more concrete, let us explain the picture we get in the case of the Grassmann variety.
\par
The combinatorial structure connected to standard monomial theory is controlled by a partially ordered set (for short we write just poset). 
In the case of the Grassmann variety $\Gr_{d,n}$, this is the set $I(d,n)$ of subsets of size $d$ of $\{1,\ldots,n\}$, with the partial order given by componentwise comparison.
\par
Let $R=\mathbb C[\Gr_{d,n}]$ be the homogeneous
coordinate ring given by the Pl\"ucker embedding $\Gr_{d,n}\hookrightarrow \mathbb P(\Lambda^d\mathbb C^n)$. 
For a given maximal chain $\mathcal C$ in the poset $I(d,n)$, we define a valuation $\nu_{\mathcal C}$ 
on the field of rational functions $\mathbb C(\Gr_{d,n})$, such that the associated Newton-Okounkov body $\mathcal P$ is, up to unimodular equivalence, independent of the choice of the chain. In fact, $\mathcal P$ is the so called Gelfand-Tsetlin polytope. 
Moreover, if one looks just at the values of the standard monomials 
that have support on the fixed chain $\mathcal C$, this defines a simplex embedded in $\mathcal P$. Indeed, by varying the maximal
chains, one gets a triangulation of $\mathcal P$ such that the simplexes are in bijection with the maximal chains. 
\par
To lift this triangulation up to the level of the Grassmann variety, we pass from the set of valuations 
$\{\nu_{\mathcal C}\mid \mathcal C\ {\rm a\ maximal\ chain}\}$ to a quasi-valuation \cite{KM} $\nu$ by taking the minimum of them: 
$$
\nu: \mathbb C(\Gr_{d,n})\setminus\{0\}\rightarrow \mathbb Z^N,\quad h\mapsto\min\{\nu_{\mathcal C}(h)\mid \mathcal{C}\hbox{\rm\ is a\ maximal\ chain}\}.
$$
This quasi-valuation induces a $\mathbb{Z}^N$-filtration $\mathcal F_\nu$ of $R$, such that the associated graded algebra ${\rm assgrad}_{\mathcal F_\nu} R$
is the discrete Hodge algebra \cite{DEP} associated to the poset $I(d,n)$. In other words, we have recovered the semi-toric degeneration 
of $\Gr_{d,n}$ into a union of $\mathbb P^N$'s described in \cite{DEP}.
\par
A geometric interpretation of the results described above is given by associating to each valuation $\nu_{\mathcal C}$ a toric degeneration, which is compatible with those Schubert varieties corresponding to the elements of the chain $\mathcal{C}$.
Therefore, by passing from a family of valuations to a quasi-valuation one only gets a semi-toric degeneration, but this has the advantage of being compatible with all Schubert varieties in $\Gr_{d,n}$. 
\par
The paper is organised as follows: after recalling basic notions and constructions on distributive (order) lattices and the associated Hibi varieties in Section \ref{Sec:Lattice} and \ref{Hibi}, we study valuations and quasi-valuations on Hibi varieties in Section \ref{Sec:Quasival} and \ref{Sec:QuasivalHibi}. In particular, we construct three different families of quasi-valuations on Hibi varieties and then apply them to construct semi-toric degenerations. The notion of an algebra governed by a lattice is introduced in Section \ref{Sec:nontoric}, and is applied to generalise the results on Hibi varieties to varieties that can be degenerated to Hibi varieties. In Section \ref{Sec:Grass} we show that Grassmann varieties fall into this category, and the previous constructions, once applied to these varieties, recover the Hodge algebra degeneration of Grassmann varieties. Relations to Feigin-Fourier-Littelmann-Vinberg polytopes are observed in Section \ref{Sec:FFLV}. In Section \ref{Sec:Outlook} we discuss questions and further directions of this work.

\section{Distributive lattices}\label{Sec:Lattice}

Let $(\ml,\vee,\wedge)$ be a finite bounded distributive lattice with operations {\it join} $\vee$ and {\it meet} $\wedge$. This structure induces a partial order on $\ml$ by:
$$p\le q\text{ if }p\wedge q=p.$$ 
With this partial order, $(\ml,\le)$ is a poset. For $p,q\in\ml$, $p$ is called a \textit{decent} of $q$ if $p<q$ and there exists no element $\ell$ in $\ml$ such that $p<\ell<q$. The unique minimal (resp. maximal) element in $\ml$ will be denoted by $\mathbb O$ (resp. $\one$). 
\par
Linearly ordered subsets in $\ml$ are called \textit{chains}. A chain $\Cc$ is called \textit{maximal} if for any other chain $\Cc'$, $\Cc\subseteq \Cc'$ implies $\Cc=\Cc'$. Let $C(\ml)$ denote the set of all maximal chains in $\ml$. The {\it length} $\mathrm{len}(\Cc)=\vert\Cc\vert-1$ of a chain $\Cc$ is the number of steps in the chain.
For a systematical introduction to lattice theory, see for example, \cite{Gr}.
\par
An element $m\in\ml$ is called \textit{join-irreducible} if $m=\ell_1\vee\ell_2$ for some $\ell_1,\ell_2\in \ml$ implies 
$m=\ell_1$ or $m=\ell_2$. Denote by $\ji$ the set of join-irreducible elements in $\ml$. 
The partial order on $\ml$ induces a partial order on $\ji$, making the latter a poset.
\par
Let $\mathcal P(\ji)$  be the {\it power set} of $\ji$, which is itself a lattice with the union of sets ``$\cup$'' as join operator and the intersection of sets ``$\cap$'' as meet operator.
A nonempty subset $\mathbf{b} \in  \mathcal P(\ji)$ is called an {\it order ideal} with respect to the induced partial order on $\ji$ if for all $m,m'\in \ji$ holds: $m\in \mathbf{b}$ and $m'<m$ implies $m'\in \mathbf{b}$.
Let $\mathcal{D}(\ji)\subset \mathcal P(\ji)$ be the set of subsets consisting of order ideals with respect to the partial order.  
\par
Two lattices are called isomorphic, if there exists a bijection between them preserving the join and meet operations. Two distributive lattices are isomorphic if they are isomorphic as lattices. Notice that endowed with the operations $\cup$ and $\cap$, $(\mathcal{D}(\ji),\cap,\cup)$ is a distributive lattice. The following theorem can be found in \cite[Theorem 107]{Gr}.

\begin{theorem}[Birkhoff]\label{Birkhoff}
The distributive lattices $(\mathcal{D}(\ji),\cap,\cup)$ and $(\ml,\vee,\wedge)$ are isomorphic.
\end{theorem}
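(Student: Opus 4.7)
The plan is to exhibit the canonical candidate isomorphism and reduce everything to an order-isomorphism argument. Define
$$\vp : \ml \longrightarrow \mathcal{D}(\ji), \qquad \vp(\ell) := \{m \in \ji \mid m \leq \ell\}.$$
First I would verify that $\vp(\ell)$ actually lies in $\mathcal{D}(\ji)$: closure under going downwards is transitivity, and nonemptiness is guaranteed because $\mathbb{O}$ itself is join-irreducible under the definition given (if $\mathbb{O}=\ell_1\vee\ell_2$ then $\ell_1,\ell_2\leq\mathbb{O}$, forcing both to equal $\mathbb{O}$), so $\mathbb{O}\in\vp(\ell)$ always. Order-preservation $\ell \le \ell' \Rightarrow \vp(\ell)\subseteq\vp(\ell')$ is immediate from transitivity.

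Next I would prove the density lemma: every $\ell\in\ml$ satisfies $\ell=\bigvee_{m\in\vp(\ell)} m$. The inequality $\geq$ is trivial. For $\leq$, induct on the rank of $\ell$ in the finite lattice $\ml$: if $\ell$ is join-irreducible then $\ell\in\vp(\ell)$; otherwise write $\ell=\ell_1\vee\ell_2$ with $\ell_1,\ell_2 <\ell$ and apply the inductive hypothesis to $\ell_1,\ell_2$, noting $\vp(\ell_i)\subseteq\vp(\ell)$. The density lemma immediately yields injectivity of $\vp$, and identifies the only possible inverse:
$$\psi : \mathcal{D}(\ji) \longrightarrow \ml, \qquad \psi(\mathbf b) := \bigvee_{m \in \mathbf b} m.$$

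The crucial surjectivity step, where distributivity genuinely enters, is to show $\vp\circ\psi=\mathrm{id}$. Fix an order ideal $\mathbf b$ and set $\ell=\psi(\mathbf b)$. The inclusion $\mathbf b\subseteq\vp(\ell)$ is clear. For the reverse, let $m\in\vp(\ell)$; then
$$m \;=\; m\wedge\ell \;=\; m\wedge\bigvee_{m'\in\mathbf b} m' \;=\; \bigvee_{m'\in\mathbf b}(m\wedge m'),$$
where the last equality is the distributive law. Since $m$ is join-irreducible and $\mathbf b$ is finite, iterating the defining property of join-irreducibility forces some term to equal $m$, i.e.\ $m\leq m'$ for some $m'\in\mathbf b$; the order-ideal property of $\mathbf b$ then gives $m\in\mathbf b$.

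Finally, $\vp$ and $\psi$ are mutually inverse and both are order-preserving, so $\vp$ is an order-isomorphism between two lattices; because $\wedge$ and $\vee$ are determined purely by the order as greatest lower and least upper bounds, $\vp$ automatically preserves both operations. The main obstacle is the distributivity step in surjectivity: without it, join-irreducibles fail to behave like ``primes'' for the join, and distinct order ideals can collapse to the same $\psi$-image, so the bijection itself breaks down. Everything else is order-theoretic bookkeeping that relies only on the finiteness of $\ml$.
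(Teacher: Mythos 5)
Your proof is correct and uses precisely the maps $\ell\mapsto\spec(\ell)$ and $\mathbf{b}\mapsto\bigvee_{m\in\mathbf{b}}m$ that the paper records as the isomorphism and its inverse; the paper itself offers no proof (it cites Gr\"atzer), and your argument is the standard one, with the distributive law correctly isolated as the only place where the ``prime-like'' behaviour of join-irreducibles, and hence surjectivity, could fail. You also handle the paper's specific conventions correctly, namely that $\mathbb{O}$ is join-irreducible under the definition given and that order ideals in $\mathcal{D}(\ji)$ are required to be nonempty, so that $\spec(\ell)$ always lands in $\mathcal{D}(\ji)$ and the join defining the inverse is never empty.
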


The isomorphism in Birkhoff's theorem can be made explicit as follows: for $\ell\in \ml$ we define
$$
\spec(\ell)=\{m\in \ji\mid m\le \ell\},
$$
and let $\mSpec(\ell)$ be the set of maximal elements in $\spec(\ell)$.
\par
The following map provides the isomorphism in the theorem of Birkhoff:
$$
\ml\rightarrow \mathcal{D}(\ji),\quad \ell \mapsto \spec(\ell),
$$
whose inverse is given by:
$$ \mathcal{D}(\ji)\ra\ml,\quad \mathbf{b} \mapsto \bigvee_{m\in \mathbf{b}} m.$$

In the following we often identify the lattice $\ml$ with the lattice $\mathcal{D}(\ji)$. The length of a maximal chain in $\ml$ is equal to the cardinality of $\ji\setminus\{\mathbb{O}\}$.
\par
An enumeration $\ji=\{m_0,m_1,\ldots,m_N\}$ of the join-irreducible elements is called   
an {\it order preserving enumeration} if $m_i<m_j$
implies $i<j$. Let $E(\ml)$ be the set of all order preserving enumerations of $J(\ml)$. 
\par
We define a map $\vp:C(\ml)\ra E(\ml)$ as follows: starting with a maximal chain $\mathcal{C}$ in $\ml$, say
$$
\mathcal{C}: \mathbb O=c_0<c_1<c_2< \ldots< c_N =\one,
$$
we associate to $\mathcal{C}$ an enumeration of $J(\ml)$ by letting 
$$m_0=\mathbb{O}\text{ and for } i=1,\ldots,N,\ \ m_i\in\spec(c_i)\setminus \spec(c_{i-1})$$ 
be the unique new element.
This defines an order preserving enumeration. Conversely, given an order preserving enumeration $\{m_0=\mathbb{O}, m_1,\ldots,m_N\}$, the associated sequence of elements
$$
\Cc:\ m_0<m_1<(m_1\vee m_2)< \ldots<\mathop{\bigvee}_{1\le i\le j}m_i<\ldots < (m_1\vee m_2\vee\cdots\vee m_N) =\one
$$
is a maximal chain in $\ml$. 

Another immediate consequence of the isomorphism between 
$\ml$ and $\mathcal{D}(\ji)$ is: 
\begin{lemma}\label{chainLB}
The map $\vp:C(\ml)\ra E(\ml)$ is a bijection.
\end{lemma}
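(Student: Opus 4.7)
The plan is to use Birkhoff's theorem (Theorem \ref{Birkhoff}) to translate everything into the language of order ideals in $J(\ml)$, where the statement becomes the familiar bijection between maximal chains of order ideals and linear extensions of a finite poset. Under the identification $\ml\cong \mathcal{D}(J(\ml))$, an element $\ell\in\ml$ corresponds to the order ideal $\spec(\ell)\subseteq J(\ml)$, and joins (resp.\ meets) correspond to unions (resp.\ intersections). A maximal chain $\mathcal{C}:\mathbb{O}=c_0<c_1<\cdots<c_N=\one$ in $\ml$ then corresponds to a strictly increasing sequence of order ideals $\emptyset\subsetneq\spec(c_0)\subsetneq\spec(c_1)\subsetneq\cdots\subsetneq\spec(c_N)=J(\ml)$ which cannot be refined.

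First I would show that the definition of $\vp$ makes sense. Because the chain is maximal, each inclusion $\spec(c_{i-1})\subsetneq \spec(c_i)$ must be \emph{covering} in the lattice $\mathcal{D}(J(\ml))$: otherwise one could interpolate a strictly intermediate order ideal, contradicting maximality. Since inserting several incomparable maximal elements of the difference still yields an order ideal, a covering inclusion of order ideals adds exactly one element. Hence $\spec(c_i)\setminus\spec(c_{i-1})=\{m_i\}$ for a unique $m_i\in J(\ml)$, and $m_i$ is a maximal element of $\spec(c_i)$. To check the order-preservation of the resulting enumeration, I would argue by contrapositive: if $m_i<m_j$ in $J(\ml)$ with $j\le i-1$, then since $\spec(c_{i-1})$ is an order ideal containing $m_j$, it would also contain $m_i$, contradicting $m_i\notin \spec(c_{i-1})$. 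Together with $m_i\ne m_j$ for $i\ne j$ this forces $i<j$.

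Next I would write down the candidate inverse $\psi:E(\ml)\to C(\ml)$ already indicated in the excerpt, sending an order-preserving enumeration $\{m_0,m_1,\ldots,m_N\}$ to the sequence $c_i:=\bigvee_{j\le i}m_j$. Under Birkhoff's theorem, $\spec(c_i)$ equals the order ideal generated by $\{m_0,\ldots,m_i\}$, and I would check that this ideal coincides with the set $\{m_0,\ldots,m_i\}$ itself: if $m'\in J(\ml)$ and $m'<m_k$ for some $k\le i$, then the order-preserving property forces $m'=m_{k'}$ with $k'<k\le i$. Consequently $\spec(c_i)\setminus\spec(c_{i-1})=\{m_i\}$, so the $c_i$'s form a strictly increasing chain that is maximal (each step is a covering relation), starts at $\mathbb{O}$ and ends at $\bigvee_{m\in J(\ml)} m=\one$.

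Finally I would verify that $\vp$ and $\psi$ are mutually inverse: starting from a chain, applying $\vp$ extracts the new join-irreducible $m_i$ at each step, and then $\psi$ rebuilds $c_i=\bigvee_{j\le i}m_j$, which agrees with the original $c_i$ because $\spec(c_i)=\{m_0,\ldots,m_i\}$ by the same argument; conversely, starting from an enumeration, applying $\psi$ then $\vp$ returns $\spec(c_i)\setminus\spec(c_{i-1})=\{m_i\}$ by the computation above. The step I expect to require the most care is the order-preservation argument (and its mirror, the verification that initial segments of an order-preserving enumeration are order ideals), since everything else is a direct translation of the chain/enumeration correspondence through Birkhoff's theorem.
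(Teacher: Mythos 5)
Your proof is correct and follows essentially the route the paper intends: the lemma is stated there as an immediate consequence of Birkhoff's theorem, with the map $\vp$ and its candidate inverse already described but no details supplied, and your argument fills in exactly those details (covering steps in the ideal lattice add one element, order-preservation, initial segments of an order-preserving enumeration are order ideals, mutual inverseness). One small repair: to interpolate a strictly intermediate order ideal when $\spec(c_i)\setminus\spec(c_{i-1})$ has more than one element, adjoin a \emph{minimal} element of the difference to $\spec(c_{i-1})$ (or remove a maximal element of the difference from $\spec(c_i)$); adjoining maximal elements of the difference need not produce an order ideal.
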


%%%%%%%%%%%%%%%%%%%%%%%%%%%%%%%%%%%
%%%%%%%%%%%%%%%%%%%%%%%%%%%%%%%%%%%
%%%%%%%%%%%%%%%%%%%%%%%%%%%%%%%%%%%

\section{The Hibi variety $\mathbb X_\ml$}\label{Hibi}
As before, let $\ml$ be a finite bounded distributive lattice. The associated Hibi variety \cite{Hibi} (or rather its 
projective version) is the variety $\mathbb X_\ml\subset \mathbb P(\mathbb C^{\vert \ml\vert})$ defined as the zero
set of the homogeneous ideal 
$$
I(\ml)=\langle X_{\ell_1}X_{\ell_2}-X_{\ell_1\wedge\ell_2}X_{\ell_1\vee\ell_2}\mid \ell_1,\ell_2\in \ml\text{ non-comparable}\rangle
\subset \mathbb C[X_\ell\mid \ell\in \ml].
$$
The homogeneous coordinate ring $R(\ml):= \mathbb C[X_\ell\mid \ell\in \ml] / I(\ml)$ is called the Hibi ring
of the lattice $\ml$. Since $I(\ml)$ is homogeneous, $R(\ml)$ is naturally endowed with a grading.

We write $x_\ell$ for the image of $X_\ell$ in $R(\ml)$. It is known that $\mathbb X_\ml$ is an irreducible, projectively normal embedded 
toric variety, and $R(\ml)$ is Cohen-Macaulay \cite{Hibi}. In addition, $R(\ml)$ is
an algebra with straightening law in the sense of De Concini, Eisenbud and Procesi \cite{DEP}. This implies
in particular that $R(\ml)$ has as $\mathbb C$-vector space a basis given by {\it standard monomials}, i.e., 
monomials of the form
$$
x_{\ell_1}x_{\ell_2}\cdots x_{\ell_r}\hbox{\rm\ where\ }\ell_1\ge \ell_2\ge\ldots\ge \ell_r.
$$
Denote by $\mathbb C(\mathbb X_\ml)$ the field of rational functions on $\mathbb X_\ml$, an element in $\mathbb C(\mathbb X_\ml)$ 
can always be represented
as a quotient $\frac{f}{g}$, where $f,g\in R(\ml)$ are homogeneous of the same degree.

In \cite{Hibi} one finds a second description of $R(\ml)$. Fix an order preserving enumeration
$\ji=\{m_0=\mathbb{O},m_1,\ldots,m_N\}$ of the join-irreducible elements
and identify $\ml$ with the set of order ideals $\mathcal{D}(\ji)$ (Theorem \ref{Birkhoff}).
Consider the polynomial ring $S_N=\mathbb C[y_0,\ldots,y_N]$, and let $M_{\ji}\subset S_N$ be the subset of monomials
$$
M_\ji =\{\prod_{m_i\in \spec(\ell)}y_i\mid \ell \in \ml\}.
$$
We denote by $A(\ml)$ the subalgebra of $S_N$ generated by the monomials in $M_\ji$. 
We endow $S_N$ with a grading by setting $\deg y_0=1$ and $\deg y_j=0$ for all $j\ge 1$.
The generators of $A(\ml)$ are homogeneous of degree 1, so $A(\ml)$ inherits in a natural way 
the structure of a graded algebra.
\par
As a graded algebra, $R(\ml)$ is isomorphic to $A(\ml)$ (\cite{Hibi}), the isomorphism is given on the generators by
$$
x_\ell \mapsto \prod_{m_i\in \spec(\ell)}y_i.
$$
This isomorphism provides an explicit description of the field of rational functions on $\mathbb X_\ml$:
\begin{lemma}\label{yisomo}
Let $\phi:\mathbb C(\mathbb X_\ml)\rightarrow \mathbb C(y_1,\ldots,y_N)$ be the map defined by: for $f,g\in A(\ml)$ homogeneous of the same degree,
$$\frac{f}{g}\mapsto \frac{f}{y_0^{\deg f}}/\frac{g}{y_0^{\deg g}}\in \mathbb C(y_1,\ldots,y_N).$$
Then $\phi$ is a field isomorphism.
\end{lemma}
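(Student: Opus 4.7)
The plan is to establish, in turn, that the given formula defines a well-defined map on $\mathbb{C}(\mathbb{X}_\ml)$, that this map is a ring homomorphism between fields (hence automatically injective), and that it is surjective. The substance lies in well-definedness and surjectivity, and I expect surjectivity to be the main obstacle.

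The preliminary observation is that because $m_0 = \mathbb{O}$ is the unique minimum of $\ji$, it satisfies $m_0 \in \spec(\ell)$ for every $\ell \in \ml$. Hence every generator $\prod_{m_i \in \spec(\ell)} y_i$ of $A(\ml)$ contains the factor $y_0$ with multiplicity exactly one. Under the grading $\deg y_0 = 1$, $\deg y_j = 0$ ($j\ge 1$), this means that a homogeneous element $f \in A(\ml)$ of degree $r$ has $y_0$-valuation exactly $r$ when written inside $\mathbb{C}[y_0,\ldots,y_N]$, so $f/y_0^{\deg f} \in \mathbb{C}[y_1,\ldots,y_N]$ and $\phi$ takes values in $\mathbb{C}(y_1,\ldots,y_N)$.

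For well-definedness, I would use the isomorphism $R(\ml) \cong A(\ml)$ of \cite{Hibi} to identify a homogeneous rational function $f/g$ on $\mathbb{X}_\ml$ with a pair $f,g \in A(\ml)$ of the same degree, noting that $R(\ml)$ is a domain because $\mathbb{X}_\ml$ is irreducible. An equality $f/g = f'/g'$ in $\mathbb{C}(\mathbb{X}_\ml)$ then becomes $fg' = f'g$ in the polynomial ring $\mathbb{C}[y_0,\ldots,y_N]$. Since $\deg f + \deg g' = \deg f' + \deg g$, dividing both sides by this common $y_0$-power produces exactly the relation $\phi(f/g) = \phi(f'/g')$ in $\mathbb{C}(y_1,\ldots,y_N)$. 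Additivity and multiplicativity of $\phi$ follow from standard quotient arithmetic, so $\phi$ is a ring homomorphism between fields and hence injective.

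Surjectivity is the main step. It suffices to show that each $y_i$ for $i\in\{1,\ldots,N\}$ lies in the image. Fix such an $i$ and consider the join-irreducible element $m_i \in \ji \subset \ml$. The subset $\spec(m_i) \setminus \{m_i\}$ is nonempty (it contains $m_0$) and is itself an order ideal of $\ji$, since $m_i$ is maximal in $\spec(m_i)$ and its removal preserves downward closure. By Theorem \ref{Birkhoff}, there is a unique element $\ell_i' \in \ml$ with $\spec(\ell_i') = \spec(m_i)\setminus\{m_i\}$. Both $x_{m_i}$ and $x_{\ell_i'}$ have degree $1$ in $R(\ml)$, and a direct computation under the isomorphism $R(\ml)\cong A(\ml)$ gives
$$
\phi\!\left(\frac{x_{m_i}}{x_{\ell_i'}}\right)
=\frac{\bigl(\prod_{m_j\in \spec(m_i)} y_j\bigr)/y_0}{\bigl(\prod_{m_j\in \spec(\ell_i')} y_j\bigr)/y_0}
= y_i.
$$
The one non-routine ingredient is precisely this appeal to Birkhoff's theorem in order to realise the order ideal $\spec(m_i)\setminus\{m_i\}$ as coming from an honest element of $\ml$; once that is available, surjectivity and hence the lemma follow immediately.
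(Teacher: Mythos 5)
Your proof is correct and follows essentially the same route as the paper: well-definedness from homogeneity and the common $y_0$-power, the observation that $\phi$ is a (hence injective) field homomorphism, and surjectivity by writing each $y_i$ as $\phi$ of a ratio of two degree-one coordinates whose associated order ideals differ exactly by $\{m_i\}$. The only cosmetic difference is in that last step: the paper uses the consecutive chain elements $c_{i-1}<c_i$ coming from the fixed order-preserving enumeration, whereas you use $m_i$ together with the element of $\ml$ corresponding, via Birkhoff's theorem, to the order ideal $\spec(m_i)\setminus\{m_i\}$ --- both choices are valid and the arguments are interchangeable.
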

\begin{proof}
An element in $\mathbb C(\mathbb X_\ml)$  can always be represented
as a quotient $\frac{f}{g}$, where $f,g\in R(\ml)\simeq A(\ml)$ are homogeneous of the same degree.
In terms of the ring $A(\ml)$ this means $f$ and $g$ are divisible by the same power of $y_0$ and hence
$\phi(\frac{f}{g})\in \mathbb C(y_1,\ldots,y_N)$. If $\frac{f}{g}=\frac{p}{q}$, then 
$$
\frac{f}{y_0^{\deg f}}/\frac{g}{y_0^{\deg g}}=\frac{p}{y_0^{\deg p}}/\frac{q}{y_0^{\deg q}},
$$
so the image is independent of the choice of the representative.
It follows that $\phi$ is well-defined and $\phi(\mathbb C(\mathbb X_\ml))\subseteq \mathbb C(y_1,\ldots,y_N)$.

Now one easily checks that $\phi$ is a ring homomorphism. 
Since the enumeration is order preserving, we have for all $i\ge 1$: $y_0\cdots y_{i-1}y_i$ and $y_0\cdots y_{i-1}$ are homogeneous elements
in $M_\ji$ of the same degree, 
and hence $y_i=\phi(\frac{y_0\cdots y_{i-1}y_i}{y_0\cdots y_{i-1}})\in \phi(\mathbb C(\mathbb X_\ml))$, which implies
$\phi(\mathbb C(\mathbb X_\ml))= \mathbb C(y_1,\ldots,y_N)$.
\end{proof}

Let $\Cc$ be a maximal chain in $\ml$. By Lemma~\ref{chainLB}, this can be identified with an order preserving enumeration of $\ji$. If $\Cc=\{\mathbb O=c_0<c_1<\ldots<c_N=\one\}$ 
and the corresponding enumeration is $\{m_0=\mathbb{O},m_1,\ldots,m_N\}$, then $c_1=m_1$, $c_2=m_1\vee m_2,\ldots$,
and the set of monomials associated to the elements in the chain are:
$$
M_\Cc:=\{x_{\mathbb O}=y_0,\ x_{c_1}=y_0y_1,\ x_{c_2}=y_0y_1y_2,\ \ldots,\ x_{c_N}=y_0y_1\cdots y_N\}.
$$
This description of $M_\Cc$ implies that the subalgebra $\mathbb C[\Cc]$ of $R(\ml)\simeq A(\ml)$ generated by 
$M_\Cc$ is isomorphic to a polynomial algebra. For $\ell\in\ml\setminus\{\mathbb{O}\}$, we denote
$$\hat{x}_\ell:=\prod_{m_i\in\spec(\ell)}\frac{x_{c_i}}{x_\mathbb{O}}\in\mathbb C(\mathbb X_\ml),$$
and $\hat{x}_\mathbb{O}=x_{\mathbb{O}}$.
We associate to $M_\Cc$ a sequence of rational functions
$$
\hat M_\Cc:=\{\hat x_{c_1}=y_1,\ \hat x_{c_2}=y_1y_2,\ \ldots,\ \hat  x_{c_N}=y_1\cdots y_N\}\subset \mathbb C(\mathbb X_\ml).
$$
We get as an immediate consequence:
\begin{corollary}\label{chainxisomo}
$\mathbb C(\mathbb X_\ml) = \mathbb C(\hat x_{c_1},\ldots,\hat x_{c_N})$.
\end{corollary}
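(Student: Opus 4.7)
The plan is to transfer the problem through the field isomorphism $\phi:\mathbb{C}(\mathbb{X}_\ml)\to\mathbb{C}(y_1,\ldots,y_N)$ of Lemma~\ref{yisomo} and show that the images $\phi(\hat x_{c_i})$ generate the target as a field. Since $\phi$ is an isomorphism, this will be equivalent to the assertion, and the corollary will essentially be a restatement of Lemma~\ref{yisomo} made explicit along the chain $\mathcal{C}$.

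First, I would unpack $\phi(\hat x_{c_i})$. Because the enumeration $\varphi(\mathcal{C})=(m_0,\ldots,m_N)$ is precisely the one for which $\spec(c_i)=\{m_0,\ldots,m_i\}$, the identification $R(\ml)\simeq A(\ml)$ sends $x_{c_i}$ to the monomial $y_0y_1\cdots y_i$. Applying the definition of $\phi$ to the degree-$0$ rational function $x_{c_i}/x_\mathbb O$ gives $\phi(x_{c_i}/x_\mathbb O)=y_1y_2\cdots y_i$; unwinding the product defining $\hat x_{c_i}$ then yields the monomial identities recorded in $\hat M_\mathcal C$, namely $\phi(\hat x_{c_i})=y_1y_2\cdots y_i$ for $i=1,\ldots,N$.

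The second step is a one-line telescoping argument: setting $\phi(\hat x_{c_0}):=1$, one has $y_1=\phi(\hat x_{c_1})$ and $y_i=\phi(\hat x_{c_i})/\phi(\hat x_{c_{i-1}})$ for $2\le i\le N$. Hence every generator $y_i$ of $\mathbb{C}(y_1,\ldots,y_N)$ lies in the subfield $\mathbb{C}\bigl(\phi(\hat x_{c_1}),\ldots,\phi(\hat x_{c_N})\bigr)$, and so this subfield equals $\mathbb{C}(y_1,\ldots,y_N)$. Pulling back through the field isomorphism $\phi^{-1}$ of Lemma~\ref{yisomo} yields $\mathbb{C}(\hat x_{c_1},\ldots,\hat x_{c_N})=\mathbb{C}(\mathbb{X}_\ml)$, which is the claim.

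There is really no obstacle here beyond the bookkeeping in the first step; the statement is a direct corollary of Lemma~\ref{yisomo} once one observes that the enumeration attached to a maximal chain converts the elements $\hat x_{c_i}$ into the initial monomials $y_1\cdots y_i$, from which the individual variables $y_i$ are recovered by successive quotients.
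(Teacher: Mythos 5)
Your argument is correct and is essentially the paper's own: the paper records the identities $\hat x_{c_i}=y_1\cdots y_i$ in the set $\hat M_{\mathcal C}$ and then deduces the corollary immediately from Lemma~\ref{yisomo}, exactly as you do via the telescoping quotients $y_i=\phi(\hat x_{c_i})/\phi(\hat x_{c_{i-1}})$.
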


%%%%%%%%%%%%%%%%%%%%%%%%%%%%%%%%%%%
%%%%%%%%%%%%%%%%%%%%%%%%%%%%%%%%%%%
%%%%%%%%%%%%%%%%%%%%%%%%%%%%%%%%%%%
%%%%%%%%%%%%%%%%%%%%%%%%%%%%%%%%%%%

\section{$\mathbb Z^N$-valued valuations and quasi-valuations}\label{Sec:Quasival}

\subsection{Valuations on function fields}\label{Sec:Val}
Let $X\subset \mathbb P^N$ be a projective variety with field of rational functions $\mathbb{C}(X)$ and homogeneous coordinate ring $\mathbb{C}[X]$. The notions of pre-valuations, valuations and quasi-valuations are available in general situations \cite{KK,KM}. We will restrict ourselves in this paper to these notions defined on the field $\mathbb C(X)$.
\par
By a {\it lexicographic type total order}  ``$\ge$'' on  $\mathbb Z^N$
we mean that ``$\ge$'' is either the lexicographic order or the reverse lexicographic order (see for example Chapter 2, Section 2 in \cite{CLO}). We fix such a total order ``$>$" on  $\mathbb Z^N$ and  write $(\mathbb Z^N,>)$
to emphasize that  $\mathbb Z^N$ is endowed with a fixed total order. 
\par
A $\mathbb Z^N$-valued \textit{pre-valuation} on $\mathbb C(X)$ is a map
$\nu: \mathbb C(X)\setminus\{0\}\rightarrow \mathbb Z^N$
such that 
\begin{itemize}
\item[{\it (a)}] $\nu(f+g)\ge \min\{\nu(f),\nu(g)\}$ for all nonzero $f$ and $g$ in $\mathbb C(X)$,
\item[{\it (b)}] $\nu(cf)= \nu(f)$ for all nonzero $f$ and $c\in\mathbb C^*$.
\end{itemize}
A pre-valuation $\nu:\mathbb{C}(X)\setminus\{0\}\ra\mathbb{Z}^N$ is called a \emph{valuation} if it satisfies the following condition (c); it is called a \textit{quasi-valuation} if it satisfies the following condition (c'):
\begin{itemize}
\item[{\it (c)}] $\nu(fg)=\nu(f)+\nu(g)$ for all nonzero $f$ and $g$.
\item[{\it (c')}] $\nu(fg)\geq\nu(f)+\nu(g)$ for all nonzero $f$ and $g$.
\end{itemize}

Let $\nu$ be a quasi-valuation. For $\mathbf{v}\in\mathbb{Z}^N$ we define
$$
\nu_{\geq\mathbf{v}}:=\{f\in \mathbb C(X)\setminus\{0\}\mid \nu(f)\ge \mathbf{v}\}\cup\{0\},\ \ \nu_{>\mathbf{v}}:=\{f\in \mathbb C(X)\setminus\{0\}\mid \nu(f)> \mathbf{v}\}\cup\{0\}.
$$
The associated {\it leaf} is defined to be the quotient vector space
$$
\nu_\mathbf{v}:=\nu_{\geq\mathbf{v}}/\nu_{>\mathbf{v}}.
$$
We say that $\nu$ has at most {\it one-dimensional leaves} if $\dim \nu_\mathbf{v}\le 1$ for all $\mathbf{v}\in \mathbb Z^N$.
\par
Restricting $\nu$ to the subalgebra $\mathbb{C}[X]$ gives a $\mathbb{Z}^N$-algebra filtration on $\mathbb{C}[X]$.

\begin{proposition}\label{minquasivalue}
Let $X\subset \mathbb P^N$ be a projective variety and let 
$\nu_1,\ldots,\nu_r$ be $\mathbb Z^N$-valued quasi-valuations on $\mathbb C(X)$. Set
$$
\nu: \mathbb C(X)\setminus\{0\}\rightarrow \mathbb Z^N,\quad h\mapsto\min\{\nu_j(h)\mid j=1,\ldots,r\}.
$$
Then $\nu$ is a $\mathbb Z^N$-valued quasi-valuation.
\end{proposition}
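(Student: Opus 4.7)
The plan is to verify directly the three defining axioms (a), (b), (c$'$) of a quasi-valuation for the pointwise minimum $\nu = \min_j \nu_j$, exploiting two elementary but essential facts: that $\min$ commutes with itself (so iterated minima can be rearranged) and that the fixed lexicographic-type order on $\mathbb{Z}^N$ is compatible with the additive group structure, i.e.\ $a \ge b$ implies $a+c \ge b+c$.

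First I would verify axiom (b), which is immediate: $\nu(cf) = \min_j \nu_j(cf) = \min_j \nu_j(f) = \nu(f)$ since each $\nu_j$ already has this property. Next, for axiom (a), I would chain two inequalities. Using (a) for each $\nu_j$ gives $\nu_j(f+g) \ge \min\{\nu_j(f), \nu_j(g)\}$, so
\[
\nu(f+g) = \min_j \nu_j(f+g) \ge \min_j \min\{\nu_j(f), \nu_j(g)\} = \min\bigl\{\min_j \nu_j(f),\, \min_j \nu_j(g)\bigr\} = \min\{\nu(f), \nu(g)\},
\]
where the middle equality is the standard reshuffling of a double minimum.

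The main step, and the only one requiring a little care, is axiom (c$'$). Applying (c$'$) for each $\nu_j$ yields $\nu_j(fg) \ge \nu_j(f) + \nu_j(g)$, hence
\[
\nu(fg) = \min_j \nu_j(fg) \ge \min_j \bigl(\nu_j(f) + \nu_j(g)\bigr).
\]
It remains to show $\min_j (\nu_j(f) + \nu_j(g)) \ge \nu(f) + \nu(g)$. Here I would use order-compatibility of addition: for every index $j$ we have $\nu_j(f) \ge \nu(f)$ and $\nu_j(g) \ge \nu(g)$, so adding gives $\nu_j(f) + \nu_j(g) \ge \nu(f) + \nu(g)$, and taking the minimum over $j$ preserves the inequality.

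The potentially delicate point is precisely this compatibility of the total order with addition; once one records that every lexicographic-type order on $\mathbb{Z}^N$ makes $(\mathbb{Z}^N, +, \ge)$ into a totally ordered abelian group (so one can add inequalities and take minima freely), the proof reduces to the three short computations above. Since $\nu$ takes values in $\mathbb{Z}^N$ on nonzero elements (the minimum of finitely many elements of a totally ordered set is attained), no well-definedness issue arises.
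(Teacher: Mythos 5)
Your proposal is correct and follows essentially the same route as the paper: check axioms (a), (b), (c$'$) directly for the pointwise minimum, using the reshuffling of double minima for (a) and the compatibility of the lexicographic-type order with addition for (c$'$). The paper's proof is just a more compressed version of the same three computations, so no further comment is needed.
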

\begin{proof} Let $f,g\in \mathbb C(X)$, then
$$
\begin{array}{rcl}
\nu(f+g)&=&\min\{\nu_j(f+g)\mid  j=1,\ldots,r\}\\
&\ge&\min\{\nu_j(f),\nu_j(g)\mid  j=1,\ldots,r\}\\
&=&\min\{\nu(f),\nu(g)\}
\end{array}
$$
for all nonzero $f$ and $g$ in $\mathbb C(X)$. Multiplying by a nonzero complex number does not change
the value of the quasi-valuations, and
$$
\begin{array}{rcl}
\nu(fg)&=&\min\{\nu_j(fg)\mid  j=1,\ldots,r\}\\
&\ge&\min\{\nu_j(f)\mid  j=1,\ldots,r\} +\min\{\nu_j(g)\mid  j=1,\ldots,r\}\\
&=&\nu(f)+\nu(g).
\end{array}
$$
It follows that $\nu$ satisfies the conditions {\it a)}, {\it b)} and {\it c')}, and hence $\nu$ is a quasi-valuation. 
\end{proof}

\subsection{Valuations for Hibi varieties}\label{valuationsmatrices}
Fix a maximal chain $\Cc=\{\mathbb{O}=c_0<c_1<\ldots<c_N=\one\}$ in $\ml$. Since $\mathbb C(\mathbb X_\ml)= \mathbb C(\hat x_{c_1},\ldots,\hat x_{c_N})$ by 
Corollary~\ref{chainxisomo}, a given $\mathbb Z^N$-valued valuation on $\mathbb C(\mathbb X_\ml)$
is completely determined by its values on the generators. So one can attach to $\Cc$ 
a matrix $B_{\nu,\Cc}\in \mathcal{M}_N(\mathbb Z)$ having as columns the values of $\nu$ on the generators (see also \cite{KM}):
$$
(\nu,\Cc)\mapsto B_{\nu,\Cc}=\bigg(\nu(\hat x_{c_1}) \vert \ldots \vert\nu(\hat x_{c_N})\bigg).
$$
If $\nu$ has at most one-dimensional leaves, then the columns of this matrix are $\mathbb Q$-linearly independent.
Let $B\in \mathcal{M}_N(\mathbb Z)$ be such that $\det B\not=0$, and let $\mathbf{v}_1,\ldots,\mathbf{v}_N$ be the column vectors. 
We define a valuation on $\mathbb C(\mathbb X_\ml)=\mathbb C(\hat x_{c_1},\ldots,\hat x_{c_N})$ as follows.
\par
We use the abbreviation ${\hat x}^{\mathbf{n}}$ for $\hat x_{c_1}^{n_1}\cdots \hat x_{c_N}^{n_N}$. For a monomial ${\hat x}^{\mathbf{n}}\in\mathbb{C}[\hat x_{c_1},\ldots,\hat x_{c_N}]$, we define
$$
\nu_{B,\Cc}({\hat x}^{\mathbf{n}}):=\sum_{j=1}^N n_j \mathbf{v}_j\in\mathbb{Z}^N;
$$
for polynomials in $\mathbb{C}[\hat x_{c_1},\ldots,\hat x_{c_N}]$, we define
$$
\nu_{B,\Cc}(\sum_{\mathbf{n}} c_{\mathbf{n}} {\hat x}^{\mathbf{n}}):=\min\{\nu_{B,\Cc}({\hat x}^{\mathbf{n}})\mid c_{\mathbf{n}}\not=0\}.
$$
By construction, $\nu_{B,\Cc}$ satisfies the conditions {\it a)} and {\it b)} in Section \ref{Sec:Val}. Moreover, $\nu_{B,\Cc}$ is additive on the product of monomials, \emph{i.e.}  
$$
\nu_{B,\Cc}({\hat x}^{\mathbf{n}}{\hat x}^{\mathbf{q}})=\nu_{B,\Cc}({\hat x}^{\mathbf{n}+\mathbf{q}})=
\nu_{B,\Cc}({\hat x}^{\mathbf{n}})+\nu_{B,\Cc}({\hat x}^{\mathbf{q}}).
$$
Since a lexicographic-type order has been fixed on $\mathbb Z^N$, we have in addition the following property: if $\nu_{B,\Cc}(x^{\mathbf{n}})>\nu_{B,\Cc}(x^{\mathbf{p}})$,
then for any $\mathbf{q}\in\mathbb{Z}^N$,
\begin{equation}\label{additivity}
\nu_{B,\Cc}({\hat x}^{\mathbf{n}+\mathbf{q}})=
\nu_{B,\Cc}({\hat x}^{\mathbf{n}})+\nu_{B,\Cc}({\hat x}^{\mathbf{q}})>
\nu_{B,\Cc}({\hat x}^{\mathbf{p}})+\nu_{B,\Cc}({\hat x}^{\mathbf{q}})=\nu_{B,\Cc}({\hat x}^{\mathbf{p}+\mathbf{q}}).
\end{equation}
It follows that $\nu_{B,\Cc}(fg)= \nu_{B,\Cc}(f)+ \nu_{B,\Cc}(g)$ for all $f,g\in \mathbb C[\hat x_{c_1},\ldots,\hat x_{c_N}]$. Extending the map $\nu_{B,\Cc}$ to the fraction field by setting 
$$\nu_{B,\Cc}\left(\frac{f}{g}\right)=\nu_{B,\Cc}(f)-\nu_{B,\Cc}(g),$$
we obtain a valuation on $\mathbb{C}(X)$. The linear independence of the column vectors of $B$
implies that $\nu_{B,\Cc}$ is a valuation with at most one-dimensional leaves.

%%%%%%%%%%%%%%%%%%%%%%%%%%%%%%%%%%%
%%%%%%%%%%%%%%%%%%%%%%%%%%%%%%%%%%%
%%%%%%%%%%%%%%%%%%%%%%%%%%%%%%%%%%%
%%%%%%%%%%%%%%%%%%%%%%%%%%%%%%%%%%%

\section{Examples for quasi-valuations on Hibi varieties}\label{Sec:QuasivalHibi}

We provide in this section some examples of quasi-valuations on Hibi varieties.
\par
As before, let $\ml$ be a finite bounded distributive lattice. By the \textit{height} $\heit(\ell)$ of an element $\ell\in \ml$ we mean the length of a chain joining $\ell$ with the unique minimal element.
\par
Let $\{\mathbf{e}_1,\ldots,\mathbf{e}_N\}$ (resp. $\{\mathbf{e}_0,\mathbf{e}_1,\ldots,\mathbf{e}_N\}$) be the standard basis of $\mathbb Z^N$ (resp. $\mathbb{Z}^{N+1}$). For this section we fix as total order on $\mathbb Z^{N}$ 
the {\it reverse lexicographic order}.  On $\mathbb Z^{N+1}$ we fix the {\it graded reverse lexicographic order}, where the degree
is provided by the coefficient of $\mathbf e_0$.

\subsection{The support quasi-valuation}\label{supportmap}
We fix a maximal chain  $\Cc=\{\mathbb O=c_0<c_1<\ldots<c_N=\one\}$ in $\ml$, in order to identify $\mathbb C(\mathbb X_\ml)$ with $\mathbb C(\hat x_{c_1},\ldots,\hat x_{c_N})$.
Let $\ji=\{m_0=\mathbb{O}, m_1,m_2,\ldots,m_N\}$ be the associated order preserving enumeration of the join-irreducible elements. For $\ell\in\ml$, let $\spec(\ell)^*=\spec(\ell)\setminus\{\mathbb{O}\}$.
By the arguments in Section~\ref{valuationsmatrices}, the map which associates to $\hat x_{c_j}$ the vector 
$$\sum_{m_i\in \spec(c_j)^*}\mathbf{e}_{i}\in\mathbb Z^N$$
can be extended to a valuation
$$
\nu_{\Cc,\spec}:\mathbb C(\mathbb X_\ml)\setminus\{0\}\rightarrow \mathbb Z^N.
$$
\par
Let now $\mathbb X_\ml\subset  \mathbb P(\mathbb C^{\vert \ml\vert})$ be the embedded Hibi variety and
denote by $R(\ml)=\bigoplus_{i\geq 0} R_i$ the homogeneous coordinate ring (see Section~\ref{Hibi}). 
We use the valuation $\nu_{\Cc,\spec}$ to define the valuation monoid
associated to $R(\ml)$ by
$$
\Gamma_{\nu_{\Cc,\spec}}(R(\ml))= \bigoplus_{i\ge 0}\Gamma_{\nu_{\Cc,\spec}}(R_i), 
\hbox{\rm\ where\ \ } \Gamma_{\nu_{\Cc,\spec}}(R_i)=\{i\mathbf{e}_0 + \nu_{\mathcal{C},\spec}\left(\frac{h}{x_0^i}\right) \mid h\in R_i\}.
$$
The associated Newton-Okounkov body is the closure of the convex hull:
$$
\mathrm{NO}_{\mathcal C}(\ml)=\overline{{\rm conv}\left({\bigcup_{j\geq 1}\left\{ \frac{1}{j} v \mid v\in \Gamma_{\nu_{\Cc,\spec}}(R_j) \right\}}\right)}.
$$

By Proposition \ref{minquasivalue}, we define the quasi-valuation $\nu_{\spec}$ as follows:
$$
\nu_\spec:\mathbb C(\mathbb X_L)\setminus\{0\}\rightarrow \mathbb Z^N, \quad \nu_{\spec}(h):=\min\{\nu_{\Cc,\spec}(h)\mid \Cc\in C(\ml)\}.
$$
\begin{theorem}\label{Thm:support}
Let $\ell_1,\ldots,\ell_k\in\ml$ and $n_1,\ldots,n_k$ be non-zero natural numbers.
The quasi-valuation $\nu_{\spec}:\mathbb C(\mathbb X_\ml)\setminus\{0\}\rightarrow \mathbb Z^N$ satisfies the following properties: 
\begin{enumerate}
\item For any maximal chain $\mathcal{C}$, 
$$
\nu_{\spec}(\hat x_{\ell_1}^{n_1}\cdots \hat x_{\ell_k}^{n_k})\le \nu_{\Cc,\spec}(\hat x_{\ell_1}^{n_1}\cdots \hat x_{\ell_k}^{n_k}),
$$
the equality holds only if $\{\ell_1,\ldots,\ell_k\}\subset \Cc$.
\item If the monomial $\hat x_{\ell_1}^{n_1}\cdots \hat x_{\ell_k}^{n_k}$ is standard, and $\mathcal C$ is a maximal chain 
containing  $\{\ell_1,\ldots,\ell_k\}$, then 
$$
\nu_{\spec}(\hat x_{\ell_1}^{n_1}\cdots \hat x_{\ell_k}^{n_k})=
\sum_{j=1}^k n_j \nu_{\spec}(\hat x_{\ell_j})
=\sum_{j=1}^k n_j \nu_{\mathcal C,\spec}(\hat x_{\ell_j}).
$$
\item If the monomial $\hat x_{\ell_1}^{n_1}\cdots \hat x_{\ell_k}^{n_k}$ is not standard, then
$$
\nu_{\spec}(\hat x_{\ell_1}^{n_1}\cdots \hat x_{\ell_k}^{n_k})
>\sum_{j=1}^k n_j \nu_{\spec}(\hat x_{\ell_j}).
$$
\end{enumerate}
\end{theorem}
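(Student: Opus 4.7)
The central computation is an explicit formula for the chain valuation on every $\hat{x}_\ell$. Fix a maximal chain $\Cc=\{\mathbb{O}=c_0<\cdots<c_N=\one\}$ with corresponding order-preserving enumeration $(m_0=\mathbb{O},m_1,\ldots,m_N)$ of $\ji$ (Lemma~\ref{chainLB}). I would first establish
$$
\nu_{\Cc,\spec}(\hat{x}_\ell)\;=\;\sum_{m_i\in\spec(\ell)^*}\mathbf{e}_i\quad\text{for every }\ell\in\ml.
$$
Using Lemma~\ref{yisomo}, $\hat{x}_\ell$ factors in $\mathbb{C}(\mathbb{X}_\ml)$ as $\prod_{m_i\in\spec(\ell)^*}(\hat{x}_{c_i}/\hat{x}_{c_{i-1}})$ (with $\hat{x}_{c_0}:=1$), and additivity of the valuation combined with the telescoping identity $\nu_{\Cc,\spec}(\hat{x}_{c_i})-\nu_{\Cc,\spec}(\hat{x}_{c_{i-1}})=\mathbf{e}_i$ yields the formula. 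By additivity once more, for $m=\hat{x}_{\ell_1}^{n_1}\cdots\hat{x}_{\ell_k}^{n_k}$,
$$
\nu_{\Cc,\spec}(m)\;=\;\sum_{j=1}^k n_j\sum_{m_i\in\spec(\ell_j)^*}\mathbf{e}_i,
$$
whose $i$-th entry is $\mathbf{w}(m_i)$ for the \emph{order-reversing} function $\mathbf{w}:\ji\setminus\{\mathbb{O}\}\to\mathbb{Z}_{\ge 0}$, $\mathbf{w}(m):=\sum_{j:\,m\le\ell_j}n_j$, which is intrinsic (independent of $\Cc$). Varying $\Cc$ therefore only permutes the entries of $\nu_{\Cc,\spec}(m)$; the underlying multiset is chain-independent. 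Part~(1) is now immediate from $\nu_\spec=\min_\Cc\nu_{\Cc,\spec}$.

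For Part~(2), assume $\ell_1\ge\cdots\ge\ell_k$ and $\Cc\supseteq\{\ell_1,\ldots,\ell_k\}$. Writing $i_j:=\heit(\ell_j)=|\spec(\ell_j)^*|$, in $\Cc$'s enumeration one has $\ell_j=c_{i_j}$ and $\spec(\ell_j)^*=\{m_1,\ldots,m_{i_j}\}$, giving $\nu_{\Cc,\spec}(\hat{x}_{\ell_j})=\mathbf{e}_1+\cdots+\mathbf{e}_{i_j}$. This is the revlex-minimum $0/1$-vector of weight $i_j$ and is attained by any chain through $\ell_j$, so $\nu_\spec(\hat{x}_{\ell_j})=\nu_{\Cc,\spec}(\hat{x}_{\ell_j})$ and $\sum_j n_j\nu_\spec(\hat{x}_{\ell_j})=\nu_{\Cc,\spec}(m)$. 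Since $i_1\ge\cdots\ge i_k$, the $p$-th coordinate $\sum_{j:\,i_j\ge p}n_j$ of $\nu_{\Cc,\spec}(m)$ is weakly decreasing in $p$; but the weakly decreasing arrangement of a fixed multiset is exactly its reverse-lexicographic minimum. Since every $\nu_{\Cc',\spec}(m)$ is a rearrangement of the same multiset, $\Cc$ realises the minimum and $\nu_\spec(m)=\nu_{\Cc,\spec}(m)$.

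For Part~(3), suppose $m$ is not standard, and pick an incomparable pair $\ell_i,\ell_j$. The Hibi relation gives $\hat{x}_{\ell_i}\hat{x}_{\ell_j}=\hat{x}_{\ell_i\vee\ell_j}\hat{x}_{\ell_i\wedge\ell_j}$ in $\mathbb{C}(\mathbb{X}_\ml)$, so iterating straightening expresses $m$ as a standard monomial $m^*$. Since $m=m^*$ as elements of $\mathbb{C}(\mathbb{X}_\ml)$, $\nu_\spec(m)=\nu_\spec(m^*)$, which by Part~(2) equals $\sum_s n_s^*(\mathbf{e}_1+\cdots+\mathbf{e}_{\heit(\ell_s^*)})$. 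A single straightening step replaces a pair of heights $(r_i,r_j)$ by $(\heit(\ell_i\wedge\ell_j),\heit(\ell_i\vee\ell_j))$; the identity $|\spec(\ell_1)^*|+|\spec(\ell_2)^*|=|\spec(\ell_1\vee\ell_2)^*|+|\spec(\ell_1\wedge\ell_2)^*|$, coming from $\spec(\ell_1\vee\ell_2)=\spec(\ell_1)\cup\spec(\ell_2)$ and $\spec(\ell_1\wedge\ell_2)=\spec(\ell_1)\cap\spec(\ell_2)$, preserves the sum of heights, while incomparability of $\ell_i,\ell_j$ strictly spreads them: $\heit(\ell_i\wedge\ell_j)<\min(r_i,r_j)\le\max(r_i,r_j)<\heit(\ell_i\vee\ell_j)$. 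On the vector $\sum_s n_s(\mathbf{e}_1+\cdots+\mathbf{e}_{r_s})$ the effect is to decrement entries on $(\heit(\ell_i\wedge\ell_j),\min(r_i,r_j)]$ by $1$ and increment entries on $(\max(r_i,r_j),\heit(\ell_i\vee\ell_j)]$ by $1$; the rightmost nonzero entry of the difference is thus positive, so each straightening step strictly raises the vector in reverse lexicographic order. Summing over the finite sequence of straightening steps produces the strict inequality.

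\emph{The main obstacle.} The crux is Part~(2): verifying that the arrangement produced by a chain through all the $\ell_j$'s attains the revlex minimum, not merely among chain-induced enumerations but among \emph{all} rearrangements of the intrinsic multiset. This reduces to the combinatorial fact that the weakly decreasing permutation of a multiset is its revlex minimum. The straightening bookkeeping in Part~(3) is then routine once the identity $|\spec(\ell_1\vee\ell_2)^*|+|\spec(\ell_1\wedge\ell_2)^*|=|\spec(\ell_1)^*|+|\spec(\ell_2)^*|$ is in hand.
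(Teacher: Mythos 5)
Your treatment of parts (2) and (3) is correct, and it genuinely differs from the paper's route. For (2) the paper deduces the equality from the ``only if'' clause of (1) after extending the standard support to a maximal chain, whereas you argue intrinsically: the coordinates of $\nu_{\Cc,\spec}(\hat x_{\ell_1}^{n_1}\cdots\hat x_{\ell_k}^{n_k})$ are the values $\mathbf{w}(m)=\sum_{j:\,m\le\ell_j}n_j$ read along the enumeration attached to $\Cc$, so all chains give rearrangements of one and the same multiset, and a chain through the support produces the weakly decreasing arrangement, which is the reverse-lexicographic minimum among all rearrangements. This is clean and, unlike the paper's argument, does not lean on (1). For (3) the paper simply notes that no chain contains the whole support and compares $\min_\Cc\sum_j n_j\nu_{\Cc,\spec}(\hat x_{\ell_j})$ with $\sum_j n_j\nu_\spec(\hat x_{\ell_j})$ termwise; you instead straighten inside $\mathbb C(\mathbb X_\ml)$ via $\hat x_{\ell_i}\hat x_{\ell_j}=\hat x_{\ell_i\vee\ell_j}\hat x_{\ell_i\wedge\ell_j}$ and track each step in revlex. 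Both work; your version needs (and you only assert) termination of the straightening, which is routine because each step strictly increases the tracked vector within a finite set of possibilities. You should also pin down the revlex convention explicitly (coordinates compared from the right, so $\mathbf e_1<\cdots<\mathbf e_N$); with the opposite convention part (2) fails, so the convention is load-bearing.

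The genuine gap is part (1): ``immediate from $\nu_\spec=\min_\Cc\nu_{\Cc,\spec}$'' gives only the inequality, and you never address the clause ``equality holds only if $\{\ell_1,\ldots,\ell_k\}\subset\Cc$,'' which is part of the statement. Note moreover that, read literally for arbitrary monomials, this clause cannot be proved because it is false: take $\ml$ the Boolean lattice $\{\mathbb O,a,b,\one\}$ with $a,b$ incomparable and $m=\hat x_a\hat x_b$; both maximal chains give the value $(1,1)$, so $\nu_\spec(m)=\nu_{\Cc,\spec}(m)$ for a chain $\Cc$ not containing $\{a,b\}$. (The paper's own one-line deduction of this clause from the single-variable case suffers from the same issue: a chain $\Cc'$ that does better on the factor $\hat x_{\ell_s}$ may do worse on the other factors.) What is true, and what is what (2) actually needs, is the clause for standard monomials, and your multiset argument proves it with one extra observation: since $\ell_1\ge\cdots\ge\ell_k$, the elements of $\ji\setminus\{\mathbb O\}$ with $\mathbf{w}$-value at least $n_1+\cdots+n_j$ are exactly $\spec(\ell_j)^*$, so if $\nu_{\Cc,\spec}(m)$ attains the (weakly decreasing) minimum these elements must occupy the first $\heit(\ell_j)$ places of $\Cc$'s enumeration, forcing $c_{\heit(\ell_j)}=\ell_j\in\Cc$ for every $j$. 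Either prove this restricted form or flag the discrepancy; as written, your proposal silently drops a claim of the theorem.
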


\begin{proof}
For a fixed maximal chain $\Cc=\{\mathbb O=c_0<c_1<\ldots<c_N=\one\}$ we have
by definition:
$$
\nu_{\Cc,\spec}(\hat x_{c_j})=\mathbf{e}_j +\sum_{i=1}^{j-1}\mathbf{e}_i.
$$
Recall that the index $j$ of $\mathbf{e}_j$ is also the height of $c_j$. If $\ell\not\in \Cc$ and 
$x_\ell$ corresponds to $\prod_{m_i\in \spec(\ell)}y_i=y_{i_1}\cdots y_{i_r}$, where $1\le i_1<\ldots<i_r\le N$, hence
$$
\hat x_\ell = \frac{{\hat x}_{c_{i_1}}}{{\hat x}_{c_{i_1-1}}}\cdots \frac{{\hat x}_{c_{i_r}}}{{\hat x}_{c_{i_r-1}}}.
$$
This presentation is not unique, there might be cancellations, but the term $\hat x_{c_{i_r}}$ shows up in the nominator and not in the denominator
in any presentation. It follows that 
$$
\nu_{\Cc,\spec}(\hat x_\ell)=\mathbf{e}_{i_r}+\sum_{j=1}^{i_r-1}\lambda_{j}\mathbf{e}_j,
$$
where $\lambda_j\in\{0,1\}$. Now $\ell=\bigvee_{m\in \spec(\ell)} m$ and $\spec(\ell)\subset \{m_1,\ldots,m_{i_r}\}$, so it follows
that $c_{i_r}=m_1\vee m_2\vee\ldots\vee m_{i_r}$ is larger or equal to $\ell$. By assumption, $\ell\not\in \Cc$, so we have $c_{i_r}>\ell$
and hence $i_r=\heit(c_{i_r})>\heit(\ell)$. By the definition of $\heit(\ell)$, there exists another maximal chain $\mathcal{C}'$ such that $\nu_{\mathcal{C}',\spec}(\hat x_\ell)<\nu_{\mathcal{C},\spec}(\hat x_\ell)$.
\par
We prove the statement (1): the first statement holds by definition. If $\{\ell_1,\ldots,\ell_k\}$ is not contained in $\Cc$, then
there exists a smallest $s$ such that $\ell_s\notin\Cc$. The argument above shows that $
\nu_{\spec}(\hat x_{\ell_1}^{n_1}\cdots \hat x_{\ell_k}^{n_k})< \nu_{\Cc,\spec}(\hat x_{\ell_1}^{n_1}\cdots \hat x_{\ell_k}^{n_k}).$
\par
To prove the statement (2), notice that $\hat x_{\ell_1}^{n_1}\cdots \hat x_{\ell_k}^{n_k}$ is a standard monomial  implies that $\ell_1\geq\ell_2\geq\cdots\geq\ell_k$. We extend it to a maximal chain $\Cc$ in $\ml$ and apply the first part of the theorem.
\par
For the statement (3),  $\hat x_{\ell_1}^{n_1}\cdots \hat x_{\ell_k}^{n_k}$ is not a standard monomial implies that there is no maximal chain containing all $\ell_1,\ldots,\ell_k$, so we have
$$
\nu_{\spec}(\hat x_{\ell_1}^{n_1}\cdots \hat x_{\ell_k}^{n_k}) = \min\left\{\sum_{j=1}^kn_j\nu_{\Cc,\spec}(\hat x_{\ell_j})\mid \Cc\in C(\ml)\right\}
> \sum_{j=1}^kn_j\nu_{\spec}(\hat x_{\ell_j}).
$$
\end{proof}

\subsection{The maximal support quasi-valuation}\label{maxSpecmap}
For $\ell\in\ml$, instead of the entire $\spec(\ell)$, one can use the maximal elements $\mSpec(\ell)$ to define a family of valuations and a quasi-valuation. Fix a maximal chain $\Cc=\{\mathbb O=c_0<c_1<\ldots<c_N=\one\}$ in $\ml$. Let $\ji=\{m_0=\mathbb{O}, m_1,m_2,\ldots,m_N\}$ be the associated order preserving enumeration of the join-irreducible elements. Let $\nu_{\Cc,\mSpec}$ be the map associating to $\hat x_{c_j}$ for $j=1,\ldots,N$ the vector 
$$\sum_{m_i\in \mSpec(c_j)}\mathbf{e}_{i}\in\mathbb Z^N.$$ 
By the argument in Section~\ref{valuationsmatrices}, it can be extended to a $\mathbb Z^N$-valued valuation
$$
\nu_{\Cc,\mSpec}:\mathbb C(\mathbb X_\ml)\setminus\{0\}\rightarrow \mathbb Z^N.
$$
By Proposition \ref{minquasivalue}, we define the quasi-valuation $\nu_{\mSpec}$ as follows:
$$
\nu_\mSpec:\mathbb C(\mathbb X_L)\setminus\{0\}\rightarrow \mathbb Z^N, \quad \nu_{\mSpec}(h):=\min\{\nu_{\Cc,\mSpec}(h)\mid \Cc\in C(\ml)\}.
$$
The proof of the following theorem is similar to that of Theorem \ref{Thm:support}.

\begin{theorem}\label{Thm:maxSpec}
Let $\ell_1,\ldots,\ell_k\in\ml$ and $n_1,\ldots,n_k$ be non-zero natural numbers.
The quasi-valuation $\nu_{\mSpec}:\mathbb C(\mathbb X_\ml)\setminus\{0\}\rightarrow \mathbb Z^N$ satisfies the following properties: 
\begin{enumerate}
\item For any maximal chain $\mathcal{C}$, 
$$
\nu_{\mSpec}(\hat x_{\ell_1}^{n_1}\cdots \hat x_{\ell_k}^{n_k})\le \nu_{\Cc,\mSpec}(\hat x_{\ell_1}^{n_1}\cdots \hat x_{\ell_k}^{n_k}),
$$
the equality holds only if $\{\ell_1,\ldots,\ell_k\}\subset \Cc$.
\item If the monomial $\hat x_{\ell_1}^{n_1}\cdots \hat x_{\ell_k}^{n_k}$ is standard, and $\mathcal C$ is a maximal chain containing  $\{\ell_1,\ldots,\ell_k\}$, then 
$$
\nu_{\mSpec}(\hat x_{\ell_1}^{n_1}\cdots \hat x_{\ell_k}^{n_k})=
\sum_{j=1}^k n_j \nu_{\mSpec}(\hat x_{\ell_j})
=\sum_{j=1}^k n_j \nu_{\mathcal C,\mSpec}(\hat x_{\ell_j}).
$$
\item If the monomial $\hat x_{\ell_1}^{n_1}\cdots \hat x_{\ell_k}^{n_k}$ is not standard, then
$$
\nu_{\mSpec}(\hat x_{\ell_1}^{n_1}\cdots \hat x_{\ell_k}^{n_k})
>\sum_{j=1}^k n_j \nu_{\mSpec}(\hat x_{\ell_j}).
$$
\end{enumerate}
\end{theorem}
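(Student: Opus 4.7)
The plan is to mimic the proof of Theorem~\ref{Thm:support} almost verbatim, replacing $\spec$ by $\mSpec$ throughout; the three conclusions follow formally once one controls the reverse-lex leading term of $\nu_{\Cc,\mSpec}(\hat x_\ell)$ for an arbitrary maximal chain $\Cc$ and arbitrary $\ell\in\ml$. So I would begin by fixing $\Cc=\{\mathbb{O}=c_0<c_1<\ldots<c_N=\one\}$ together with its associated order preserving enumeration $\ji=\{m_0,m_1,\ldots,m_N\}$, writing $\spec(\ell)^*=\{m_{i_1}<\ldots<m_{i_r}\}$, and factoring $\hat x_\ell = \prod_{j=1}^{r}\hat x_{c_{i_j}}/\hat x_{c_{i_j-1}}$ exactly as in the proof of Theorem~\ref{Thm:support}.

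The central computation is to show that the largest-index basis vector appearing with nonzero coefficient in
$$\nu_{\Cc,\mSpec}(\hat x_\ell) = \sum_{j=1}^{r}\bigl(\nu_{\Cc,\mSpec}(\hat x_{c_{i_j}}) - \nu_{\Cc,\mSpec}(\hat x_{c_{i_j-1}})\bigr)$$
is $\mathbf{e}_{i_r}$, with coefficient $+1$. Indices larger than $i_r$ cannot occur because $\nu_{\Cc,\mSpec}(\hat x_{c_j})$ only involves $\mathbf{e}_k$ for $k\leq j$; the coefficient of $\mathbf{e}_{i_r}$ is exactly $1$ because the order preserving property of the enumeration forces $m_{i_r}$ to be a maximal element of $\{m_1,\ldots,m_{i_r}\}$ in the induced poset on $\ji$, so $\mathbf{e}_{i_r}$ appears in $\nu_{\Cc,\mSpec}(\hat x_{c_{i_r}})$ but not in $\nu_{\Cc,\mSpec}(\hat x_{c_{i_r-1}})$ nor in any $\nu_{\Cc,\mSpec}(\hat x_{c_{i_j}})$ with $j<r$. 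Consequently the reverse-lex leading term of $\nu_{\Cc,\mSpec}(\hat x_\ell)$ is $\mathbf{e}_{i_r}$, and since $c_{i_r}=m_1\vee\cdots\vee m_{i_r}\geq\ell$ with equality iff $\ell\in\Cc$, we get $i_r=\heit(\ell)$ when $\ell\in\Cc$ and $i_r>\heit(\ell)$ otherwise; in the latter case any maximal chain $\Cc'$ through $\ell$ gives a strictly smaller value in reverse-lex order.

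From here the three assertions follow as in Theorem~\ref{Thm:support}: part~(1) is immediate from the definition of $\nu_{\mSpec}$ as a minimum together with the leading-term inequality just established and the additivity~(\ref{additivity}) of any single valuation $\nu_{\Cc,\mSpec}$ on monomials; part~(2) follows by extending a standard monomial $\ell_1\geq\ldots\geq\ell_k$ to a maximal chain $\Cc$ and using both the additivity of $\nu_{\Cc,\mSpec}$ and the equality case of part~(1) applied to each factor $\hat x_{\ell_j}$; part~(3) follows because a non-standard monomial cannot be simultaneously supported on any single maximal chain, so for every $\Cc$ at least one factor lies outside $\Cc$ and part~(1) produces a strict inequality that survives passing to the minimum. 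The only genuinely new point compared with the $\spec$ case is that $\mSpec(c_j)$ generically has several elements, so $\nu_{\Cc,\mSpec}(\hat x_{c_j})$ is a much sparser sum than $\nu_{\Cc,\spec}(\hat x_{c_j})=\mathbf{e}_j+\sum_{i<j}\mathbf{e}_i$; verifying that this sparser sum nonetheless places an uncancelled $\mathbf{e}_{i_r}$ at the top of $\nu_{\Cc,\mSpec}(\hat x_\ell)$ — a step resting squarely on the order-preserving choice of enumeration — is where I expect the bulk of the care to be needed, everything else being formally identical.
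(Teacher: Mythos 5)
Your reduction of everything to the reverse-lex leading term of $\nu_{\Cc,\mSpec}(\hat x_\ell)$ is correct and is exactly the adaptation the paper intends (its own ``proof'' consists of the remark that the argument for Theorem~\ref{Thm:support} carries over): since the enumeration attached to $\Cc$ is order preserving, $m_{i_r}\in\mSpec(c_{i_r})$, so $\nu_{\Cc,\mSpec}(\hat x_\ell)$ has an uncancelled entry $+1$ in position $i_r$, and $i_r>\heit(\ell)$ precisely when $\ell\notin\Cc$, whereas any chain through $\ell$ yields a vector supported in positions $\le\heit(\ell)$. This correctly gives the inequality in (1), the single-factor strict comparison $\nu_{\mSpec}(\hat x_\ell)<\nu_{\Cc,\mSpec}(\hat x_\ell)$ for $\ell\notin\Cc$, and part (3).

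The gap is in your treatment of (2). You invoke ``the equality case of part (1) applied to each factor'', i.e.\ that $\nu_{\Cc,\mSpec}(\hat x_{\ell_j})=\nu_{\mSpec}(\hat x_{\ell_j})$ for every chain $\Cc$ containing $\ell_j$. In the $\spec$ case this is automatic, because $\nu_{\Cc,\spec}(\hat x_\ell)=\mathbf{e}_1+\cdots+\mathbf{e}_{\heit(\ell)}$ is the same for all chains through $\ell$; exactly this chain-independence fails for $\mSpec$, since the several elements of $\mSpec(\ell)$ occupy different positions in the enumerations attached to different chains through $\ell$. For instance, take $\ji\setminus\{\mathbb{O}\}=\{a,b,c\}$ with $a<b$ and $c$ incomparable to both, and $\ell=\one$: the chain $\mathbb{O}<a<a\vee b<\one$ gives $\nu_{\Cc,\mSpec}(\hat x_{\one})=\mathbf{e}_2+\mathbf{e}_3$, while $\mathbb{O}<c<a\vee c<\one$ gives $\mathbf{e}_1+\mathbf{e}_3$; since these differ, at least one chain containing $\one$ does not compute the minimum, so the step you rely on is unavailable whichever of the two lexicographic-type orders is fixed. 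The same example shows the second equality in (2), asserted for an arbitrary chain containing the support, cannot be obtained this way; and a short computation with the standard monomial $\hat x_{\one}\hat x_{a}$ in this lattice (its value is $\mathbf{e}_1+\mathbf{e}_2+\mathbf{e}_3$ on every maximal chain, while $\nu_{\mSpec}(\hat x_{\one})+\nu_{\mSpec}(\hat x_{a})$ differs from it) shows the first equality is equally delicate. So (2) requires either identifying which chains through the support simultaneously minimise all factors, or a reformulation of the statement; your write-up, like the paper's one-line reference to Theorem~\ref{Thm:support}, passes over precisely the point where $\mSpec$ differs from $\spec$. A similar caveat applies to the ``only if'' clause of (1) for products: strictness of the weighted sum requires exhibiting a single competing chain that beats $\Cc$ on the sum, not merely factor-by-factor comparisons against varying chains.
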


\subsection{The height quasi-valuation}\label{heightmap}
One can construct another quasi-valuation using the height. Fix a maximal chain $\Cc=\{\mathbb O=c_0<c_1<\ldots<c_N=\one\}$ in $\ml$ and let $\nu_{\Cc,\heit}$ be the map associating to $\hat x_{c_j}$ for $j=1,\ldots,N$ the vector $\mathbf{e}_{j}$. By the argument in Section~\ref{valuationsmatrices}, it can be extended to a $\mathbb Z^N$-valued valuation
$$
\nu_{\Cc,\heit}:\mathbb C(\mathbb X_\ml)\setminus\{0\}\rightarrow \mathbb Z^N.
$$
By Proposition \ref{minquasivalue}, we define the quasi-valuation $\nu_{\heit}$ as follows:
$$
\nu_\heit:\mathbb C(\mathbb X_L)\setminus\{0\}\rightarrow \mathbb Z^N, \quad \nu_{\heit}(h):=\min\{\nu_{\Cc,\heit}(h)\mid \Cc\in C(\ml)\}.
$$
The proof of the following theorem is similar to that of Theorem \ref{Thm:support}.

\begin{theorem}\label{Thm:height}
Let $\ell_1,\ldots,\ell_k\in\ml$ and $n_1,\ldots,n_k$ be non-zero natural numbers.
The quasi-valuation $\nu_{\heit}:\mathbb C(\mathbb X_\ml)\setminus\{0\}\rightarrow \mathbb Z^N$ satisfies the following properties: 
\begin{enumerate}
\item For any maximal chain $\mathcal{C}$, 
$$
\nu_{\heit}(\hat x_{\ell_1}^{n_1}\cdots \hat x_{\ell_k}^{n_k})\le \nu_{\Cc,\heit}(\hat x_{\ell_1}^{n_1}\cdots \hat x_{\ell_k}^{n_k}),
$$
the equality holds only if $\{\ell_1,\ldots,\ell_k\}\subset \Cc$.
\item If the monomial $\hat x_{\ell_1}^{n_1}\cdots \hat x_{\ell_k}^{n_k}$ is standard, and $\mathcal C$ is a maximal chain containing  $\{\ell_1,\ldots,\ell_k\}$, then 
$$
\nu_{\heit}(\hat x_{\ell_1}^{n_1}\cdots \hat x_{\ell_k}^{n_k})=
\sum_{j=1}^k n_j \nu_{\heit}(\hat x_{\ell_j})
=\sum_{j=1}^k n_j \nu_{\mathcal C,\heit}(\hat x_{\ell_j}).
$$
\item If the monomial $\hat x_{\ell_1}^{n_1}\cdots \hat x_{\ell_k}^{n_k}$ is not standard, then
$$
\nu_{\heit}(\hat x_{\ell_1}^{n_1}\cdots \hat x_{\ell_k}^{n_k})
>\sum_{j=1}^k n_j \nu_{\heit}(\hat x_{\ell_j}).
$$
\end{enumerate}
\end{theorem}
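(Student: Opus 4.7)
The plan is to follow the template of the proof of Theorem \ref{Thm:support} almost verbatim; only the explicit shape of $\nu_{\Cc,\heit}(\hat x_\ell)$ for $\ell\notin\Cc$ needs to be redone. Once that leading-term estimate is in place, the three assertions will fall out from Proposition \ref{minquasivalue}, the additivity of each $\nu_{\Cc',\heit}$ on products, and the definition of standard monomial.

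For the key computation, I would fix $\Cc=\{\mathbb{O}=c_0<c_1<\cdots<c_N=\one\}$ with associated order preserving enumeration $\{m_0,m_1,\ldots,m_N\}$ of $\ji$. Under the identification of Lemma \ref{yisomo} one has $y_i=\hat x_{c_i}/\hat x_{c_{i-1}}$ for $i\ge 1$, and for arbitrary $\ell$ the element $\hat x_\ell$ corresponds to the monomial $\prod_{m_i\in\spec(\ell)^*} y_i$. Since $\nu_{\Cc,\heit}$ is a valuation with $\nu_{\Cc,\heit}(\hat x_{c_j})=\mathbf{e}_j$, an immediate calculation gives
$$
\nu_{\Cc,\heit}(\hat x_\ell)=\sum_{\substack{i\ge 1\\ m_i\in\spec(\ell)}}(\mathbf{e}_i-\mathbf{e}_{i-1}),\qquad \mathbf{e}_0:=0.
$$
Setting $i_r:=\max\{i\mid m_i\in\spec(\ell),\ i\ge 1\}$, the coefficient of $\mathbf{e}_{i_r}$ in this expression equals $+1$ and all coordinates of index strictly greater than $i_r$ vanish; in the reverse lexicographic order this forces $\mathbf{e}_{i_r}$ to be the leading (last nonzero) term of $\nu_{\Cc,\heit}(\hat x_\ell)$.

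The same height argument used in Theorem \ref{Thm:support} then applies: $c_{i_r}=m_1\vee\cdots\vee m_{i_r}\ge\ell$, and $c_{i_r}\ne\ell$ because $\ell\notin\Cc$, so $i_r=\heit(c_{i_r})>\heit(\ell)$. Choosing any maximal chain $\Cc'$ through $\ell$ gives $\nu_{\Cc',\heit}(\hat x_\ell)=\mathbf{e}_{\heit(\ell)}$, which is strictly smaller than $\nu_{\Cc,\heit}(\hat x_\ell)$ in reverse lex. From this single strict drop I would deduce the three items exactly as in Theorem \ref{Thm:support}: writing $M:=\hat x_{\ell_1}^{n_1}\cdots\hat x_{\ell_k}^{n_k}$, part (1) follows by locating the smallest index $s$ with $\ell_s\notin\Cc$ and combining the strict drop on the $s$-th factor with the additivity of $\nu_{\Cc,\heit}$; part (2) follows by extending $\ell_1\ge\cdots\ge\ell_k$ to a maximal chain $\Cc$ and sandwiching $\nu_\heit(M)\le\nu_{\Cc,\heit}(M)$ against the quasi-valuation lower bound $\nu_\heit(M)\ge\sum n_j\nu_\heit(\hat x_{\ell_j})$; part (3) follows because an incomparable collection lies in no maximal chain, so every $\Cc'$ contributes a strict drop on at least one factor, and the finite minimum over $C(\ml)$ preserves this strict inequality thanks to the compatibility of reverse lex with addition in $\mathbb Z^N$.

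The essential new subtlety compared to the support case is that the coordinates of $\nu_{\Cc,\heit}(\hat x_\ell)$ are no longer automatically nonnegative---the telescoping differences can introduce cancellations and negative entries at positions $j<i_r$. The cleanest way to handle this is to observe that the reverse lexicographic order only responds to the last nonzero coordinate; since the coefficient of $\mathbf{e}_{i_r}$ is guaranteed to be $+1$ and all higher positions vanish, the awkward negative intermediate entries are irrelevant for every comparison appearing in the argument.
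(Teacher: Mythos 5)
Your proposal is correct and follows essentially the same route the paper intends: the paper gives no separate proof for the height case beyond ``similar to Theorem \ref{Thm:support}'', and your argument is exactly that adaptation, with the telescoping formula $\nu_{\Cc,\heit}(\hat x_\ell)=\sum_{m_i\in\spec(\ell)^*}(\mathbf{e}_i-\mathbf{e}_{i-1})$ replacing the $0/1$-vector of the support case and the height comparison $i_r=\heit(c_{i_r})>\heit(\ell)$ supplying the strict drop for $\ell\notin\Cc$. Your extra observation that the possible negative intermediate entries are harmless because every comparison used is decided at the last nonzero coordinate (where the entry is $+1$), together with compatibility of the order with addition, is precisely the point that needs checking beyond the support case, and you handle it correctly.
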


\subsection{Applications to semi-toric degenerations}

Let $\nu$ be one of the quasi-valuations $\nu_{\spec}$, $\nu_{\mSpec}$ or $\nu_{\heit}$ defined on $\mc(\mathbb X_{\ml})\setminus\{0\}$ above.
\par
Restricting $\nu$ to the homogeneous coordinate ring $R(\ml)=\mc[\mathbb{X}_\ml]$ gives a $\mathbb{Z}^N$-filtration of algebra on $R(\ml)$. Let $\gr_\nu(R(\ml))$ denote the associated graded algebra. The following corollary is a consequence of Theorem \ref{Thm:support}, \ref{Thm:maxSpec} and \ref{Thm:height}.

\begin{corollary}\label{Cor:discrete}
The graded algebra $\gr_\nu(R(\ml))$ is the algebra generated by $X_\ell$ with $\ell\in\ml$ and the following relations:
$$\text{if $\ell_1$ and $\ell_2$ are not comparable in $\ml$, then}\ X_{\ell_1}X_{\ell_2}=0.$$
Moreover, the images of standard monomials in $R(\ml)$ form a basis of $\gr_\nu(R(\ml))$.
\end{corollary}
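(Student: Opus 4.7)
The three quasi-valuations $\nu_\spec$, $\nu_\mSpec$, $\nu_\heit$ satisfy the same formal properties (Theorems \ref{Thm:support}, \ref{Thm:maxSpec}, \ref{Thm:height}), so I argue uniformly for any of them, written $\nu$. As in Section \ref{supportmap}, I would extend $\nu$ to a $\mz^{N+1}$-valued filtration on the homogeneous coordinate ring $R(\ml)$ by sending $h\in R(\ml)_d$ to $d\mathbf{e}_0 + \nu(h/x_\mathbb{O}^d)$; this respects the standard grading. Since the $x_\ell$ generate $R(\ml)$ as an algebra, $\gr_\nu R(\ml)$ is generated by their classes $[x_\ell]$, giving a surjective graded algebra homomorphism
$$
\psi: S:=\mc[X_\ell\mid \ell\in\ml] \twoheadrightarrow \gr_\nu R(\ml),\qquad X_\ell\mapsto [x_\ell].
$$

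The crucial step is to show $\psi(X_{\ell_1}X_{\ell_2})=0$ whenever $\ell_1,\ell_2\in\ml$ are incomparable. In this case no maximal chain of $\ml$ contains both, so $\hat x_{\ell_1}\hat x_{\ell_2}$ is a non-standard monomial, and part (3) of the relevant theorem yields $\nu(\hat x_{\ell_1}\hat x_{\ell_2})>\nu(\hat x_{\ell_1})+\nu(\hat x_{\ell_2})$. Passing to $R(\ml)$ adds the same $\mathbf{e}_0$-contribution on each side, so $\nu(x_{\ell_1}x_{\ell_2})>\nu(x_{\ell_1})+\nu(x_{\ell_2})$, which forces $[x_{\ell_1}]\cdot[x_{\ell_2}]=0$ in $\gr_\nu R(\ml)$. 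Consequently $\psi$ factors through $\bar\psi: S/I \twoheadrightarrow \gr_\nu R(\ml)$, where $I=\langle X_{\ell_1}X_{\ell_2}\mid \ell_1,\ell_2\text{ incomparable}\rangle$.

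To conclude that $\bar\psi$ is an isomorphism, I would compare Hilbert series in the standard grading. The ring $S/I$ is the Stanley--Reisner ring of the order complex of $\ml$: a monomial is nonzero in $S/I$ iff its support is a chain, and rewriting the factors in decreasing order identifies these with standard monomials, so the standard monomials form a $\mc$-basis of $S/I$. On the target side, $R(\ml)$ has standard monomials as a basis (by the straightening law \cite{DEP}), and since the filtration respects the standard grading, $\dim (\gr_\nu R(\ml))_d = \dim R(\ml)_d$ equals the number of standard monomials of degree $d$. A graded surjection between algebras with equal Hilbert series is an isomorphism, proving both claims of the corollary. The only delicate point is verifying that the strict inequality in paragraph two survives the lift from $\mc(\mathbb{X}_\ml)$ to $R(\ml)$, but this is automatic because the $\mathbf{e}_0$-shift is symmetric on both sides; the uniformity across the three quasi-valuations then requires no extra work.
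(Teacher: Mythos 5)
Your two main steps are sound and are indeed what Theorems \ref{Thm:support}, \ref{Thm:maxSpec}, \ref{Thm:height} are designed to deliver: part (3) gives $\nu(x_{\ell_1}x_{\ell_2})>\nu(x_{\ell_1})+\nu(x_{\ell_2})$ for incomparable $\ell_1,\ell_2$, hence $[x_{\ell_1}]\cdot[x_{\ell_2}]=0$ in $\gr_\nu R(\ml)$, and the Hilbert series comparison with the Stanley--Reisner ring $S/I$ is correct. The genuine gap is your very first algebraic assertion: ``since the $x_\ell$ generate $R(\ml)$, the classes $[x_\ell]$ generate $\gr_\nu R(\ml)$.'' For filtrations coming from (quasi-)valuations this inference is false in general: the associated graded of a filtered algebra need not be generated by the symbols of algebra generators, because cancellations can create elements whose symbols live in leaves unreachable by products of the generators' symbols. (For instance, a $\mathbb{Z}^2$-valued valuation on $\mc[x,y]$ with $\nu(x)=\nu(y)=(1,0)$ but $\nu(y-x)=(0,1)$ has $[y-x]$ outside the subalgebra generated by $[x]$ and $[y]$.) Since your whole proof is ``surjection plus equal Hilbert series,'' the surjectivity of $\psi$ is precisely the nontrivial point, and it is essentially equivalent to the basis statement you are trying to prove; as written the argument is circular at this spot.

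The gap can be closed using the special monomial structure of the Hibi ring, which is what the paper leaves implicit when it states the corollary as a consequence of the three theorems. Under $R(\ml)\simeq A(\ml)$ every $x_\ell$ is a monomial in $y_0,\ldots,y_N$, so for any maximal chain $\Cc$ each standard monomial of degree $d$, divided by $x_{\mathbb O}^d$, is a single Laurent monomial in $\hat x_{c_1},\ldots,\hat x_{c_N}$, and distinct standard monomials give distinct Laurent monomials. Hence no cancellation occurs in a linear combination $f=\sum_m c_m m$ of standard monomials, so $\nu_{\Cc,\spec}(\hat f)=\min\{\nu_{\Cc,\spec}(\hat m)\mid c_m\neq 0\}$ (and likewise for $\mSpec$ and $\heit$); taking the minimum over all chains and exchanging the two minima gives $\nu(f)=\min\{\nu(m)\mid c_m\neq 0\}$. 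This adaptedness of the standard monomial basis shows that the symbols of standard monomials span each graded piece of $\gr_\nu R(\ml)$, and by part (2) of the theorems each such symbol equals the corresponding product of the $[x_\ell]$, which is exactly the surjectivity of $\psi$ you need. With that inserted, your dimension count finishes the proof; apart from this missing step your route is the natural one (the paper itself records no detailed argument).
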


Using standard arguments (see for example \cite{DEP}), one obtains from this construction a flat 
degeneration of the projective toric variety $\mathbb{X}_\ml$ into a union of toric varieties,
such that each irreducible component is isomorphic to $\mathbb P^N$.

%%%%%%%%%%%%%%%%%%%%%%%%%%%%%%%%%%%
%%%%%%%%%%%%%%%%%%%%%%%%%%%%%%%%%%%
%%%%%%%%%%%%%%%%%%%%%%%%%%%%%%%%%%%
%%%%%%%%%%%%%%%%%%%%%%%%%%%%%%%%%%%

\section{A lift to some non-toric cases}\label{Sec:nontoric}
We want to extend the construction of the previous subsections to varieties, which are not necessarily toric varieties and to construct in this way semi-toric degenerations.
The following construction is inspired by the theory of Hodge algebras by De Concini, Eisenbud and Procesi \cite{DEP}.
Let  $X\subset \mathbb P(V)$ be an embedded projective variety with homogeneous 
coordinate ring $R=\bigoplus_{i\geq 0} R_i$. Let $\ml$ be a finite bounded distributive lattice and let
$\psi: \ml\rightarrow R_1$ an injective map of sets. We write $x_\ell$ for the image $\psi(\ell)\in R_1$.
\begin{definition}\label{straightlaw1}\rm
We say that $R$ is {\it governed by $\ml$}, if the set of {\it standard monomials} 
$$
{\rm SMon}:=\{x_{k_1}\cdots x_{k_r}\mid k_1\ge \ldots\ge k_r\in \ml, r\in \mathbb Z_{\ge 0}\}
$$ 
forms a vector space basis for $R$, and if $\ell_1,\ell_2\in \ml$ are not comparable and
\begin{equation}\label{straightlaw2}
x_{\ell_1}x_{\ell_2}= \sum_{k_1\ge k_2} a_{k_1,k_2} x_{k_1} x_{k_2}
\end{equation}
is the unique expression of $x_{\ell_1}x_{\ell_2}$ as a linear combination of standard monomials, then
\begin{enumerate}
\item[(a)] $a_{\ell_1\vee \ell_2,\ell_1\wedge \ell_2}=1$;
\item[(b)] if for some $(k_1,k_2)\neq (\ell_1\vee\ell_2,\ell_1\wedge\ell_2)$, $a_{k_1,k_2}\neq 0$, then for every pair $(m_1,m_2)$ where $m_1\in\mSpec(\ell_1\vee\ell_2)$ and $m_2\in\mSpec(\ell_1\wedge\ell_2)$ such that $m_1\geq m_2$, one of the following statements holds:
\begin{itemize}
\item there exists $h\in\mSpec(k_1)$ such that $h>m_1$; 
\item the statement above does not hold, and there exist $h\neq h'\in\mSpec(k_1)$ such that $h=m_1$ and $h'>m_2$;
\item the statements above do not hold, and there exist $h\in\mSpec(k_1)$, $h'\in\mSpec(k_2)$ such that $h=m_1$ and $h'\geq m_2$.
\end{itemize}
\end{enumerate}

\begin{remark}
Compared to the Hodge algebra defined in \cite{DEP}, some requirements in the notion of an algebra governed by a distributive lattice are stronger, for example: the relations in \eqref{straightlaw2} are quadratic; the leading coefficient $a_{\ell_1\vee \ell_2,\ell_1\wedge \ell_2}$ is $1$. However, the last two conditions in the part (b) of the above definition are not apparently comparable with the conditions in a Hodge algebra. Nevertheless, we expect that if $R$ is an algebra governed by a distributive lattice $\ml$, then $R$ admits a Hodge algebra structure generated by $\psi(\ml)$.
\end{remark}

\end{definition}
Fix a maximal chain  $\Cc=\{\mathbb O<c_1<\ldots<c_N=\one\}$ in $\ml$ and let $\ji=\{m_0=\mathbb{O},m_1,m_2,\ldots,m_N\}$ be 
the associated enumeration of the join-irreducible elements. We define a map from the set of standard monomials SMon to $\mathbb Z^{N+1}$
using the valuation $\nu_{\mathcal C,\spec}$ defined in Section~\ref{supportmap}. 
Let $\{\mathbf{e}_0,\mathbf{e}_1,\ldots,\mathbf{e}_N\}\subset  \mathbb Z^{N+1}$ be the canonical basis. We define
$$
\mu_{\mathcal C,\spec}:{\rm SMon}\rightarrow \mathbb Z^{N+1},\quad x{_\mathbb O}^{a_0}x_{c_1}^{a_1}\ldots x_{c_N}^{a_N}
\mapsto (\sum_{i=0}^N a_i)\,\mathbf{e}_0 + \nu_{\mathcal C,\spec}(\hat x_{c_1}^{a_1}\ldots \hat x_{c_N}^{a_N}).
$$
The coefficient of $\mathbf{e}_0$ is the total degree of the monomial.
\begin{theorem}
If $R$ is governed by $\ml$, then the map $\mu_{\mathcal C,\spec}$ extends to a valuation $\mu_{\mathcal C,\spec}:R\setminus\{0\}\rightarrow \mathbb Z^{N+1}$.
\end{theorem}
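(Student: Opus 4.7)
The plan is to extend $\mu_{\mathcal{C},\spec}$ to $R\setminus\{0\}$ using the standard monomial basis and then verify the valuation axioms. For $f\in R\setminus\{0\}$ with unique expansion $f=\sum_s c_s s$ over standard monomials, set
$$
\mu_{\mathcal{C},\spec}(f):=\min\{\mu_{\mathcal{C},\spec}(s)\mid c_s\neq 0\}
$$
in the graded reverse lexicographic order on $\mz^{N+1}$. The pre-valuation axioms (a) and (b) from Section \ref{Sec:Val} are immediate; the entire content is multiplicativity (c).

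The argument rests on two ingredients. First, $\mu_{\mathcal{C},\spec}$ is \emph{injective} on $\mathrm{SMon}$: for $x_{k_1}\cdots x_{k_r}$ with $k_1\geq\cdots\geq k_r$, the chain $\spec(k_1)\supseteq\cdots\supseteq\spec(k_r)$ makes the multiplicities $r_i:=\#\{j\mid m_i\leq k_j\}$ appear as the coefficients of $\mathbf{e}_i$ in $\mu_{\mathcal{C},\spec}(x_{k_1}\cdots x_{k_r})=r\mathbf{e}_0+\sum_j\sum_{m_i\in\spec(k_j)^*}\mathbf{e}_i$, and hence determine each $k_j$ as the lattice element whose order ideal (under Theorem \ref{Birkhoff}) is $\{m_i\mid r_i\geq j\}$. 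Second, the \emph{key claim}: for any two standard monomials $s,t$, the standard expansion of $st\in R$ has a unique standard monomial of minimum $\mu$-value, equal to $\mu(s)+\mu(t)$, and with coefficient $1$. Granted both ingredients, multiplicativity on $f=\sum c_is_i$, $g=\sum d_jt_j$ follows: since reverse lex is compatible with addition, injectivity forces the minimum of $\mu(s_i)+\mu(t_j)$ over pairs with $c_id_j\neq 0$ to be uniquely achieved by some $(i^*,j^*)$, and the unique leading standard monomial $u^*$ of $s_{i^*}t_{j^*}$ is also the unique minimum of $\mu$ in the full expansion of $fg$, with coefficient $c_{i^*}d_{j^*}\neq 0$.

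To prove the key claim, I would iteratively straighten $st$ using the quadratic relation
$$
x_{\ell_1}x_{\ell_2}=x_{\ell_1\vee\ell_2}x_{\ell_1\wedge\ell_2}+\sum_{(k_1,k_2)\neq(\ell_1\vee\ell_2,\ell_1\wedge\ell_2)}a_{k_1,k_2}x_{k_1}x_{k_2}
$$
from Definition \ref{straightlaw1} applied to incomparable factor pairs. Condition (a) yields leading coefficient $1$, preserved under iteration. The value equality $\mu(x_{\ell_1\vee\ell_2}x_{\ell_1\wedge\ell_2})=\mu(x_{\ell_1})+\mu(x_{\ell_2})$ follows from the multiset identity $\spec(\ell_1\vee\ell_2)\cup\spec(\ell_1\wedge\ell_2)=\spec(\ell_1)\cup\spec(\ell_2)$ under Birkhoff, which gives $\hat{x}_{\ell_1}\hat{x}_{\ell_2}=\hat{x}_{\ell_1\vee\ell_2}\hat{x}_{\ell_1\wedge\ell_2}$ in $\mc(\mathbb{X}_\ml)$, combined with the valuation property of $\nu_{\mathcal{C},\spec}$. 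The remaining ingredient is strict inequality $\mu(x_{k_1}x_{k_2})>\mu(x_{\ell_1\vee\ell_2}x_{\ell_1\wedge\ell_2})$ in reverse lex for every $(k_1,k_2)\neq(\ell_1\vee\ell_2,\ell_1\wedge\ell_2)$ with $a_{k_1,k_2}\neq 0$.

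The main obstacle is this strict inequality, where condition (b) of Definition \ref{straightlaw1} must be translated into a reverse-lex comparison. Since $\nu_{\mathcal{C},\spec}(\hat{x}_\ell)=\sum_{m_i\in\spec(\ell)^*}\mathbf{e}_i$ and order preservation forces $m'>m$ in $\ji$ to have strictly larger index, the rightmost (hence reverse-lex-dominant) nonzero coordinates of such vectors are controlled by $\mSpec$. The three mutually exclusive cases of condition (b) supply, for each pair $(m_1,m_2)$ with $m_1\in\mSpec(\ell_1\vee\ell_2)$, $m_2\in\mSpec(\ell_1\wedge\ell_2)$ and $m_1\geq m_2$, a witness $h\in\mSpec(k_1)\cup\mSpec(k_2)$ whose index either strictly exceeds (case 1) or matches (cases 2 and 3) that of $m_1$, with extra weight in the latter cases shifted to a strictly larger index via $h'>m_2$. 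A careful case-by-case matching of these witnesses with the rightmost differing coordinate between $\nu_{\mathcal{C},\spec}(\hat{x}_{k_1}\hat{x}_{k_2})$ and $\nu_{\mathcal{C},\spec}(\hat{x}_{\ell_1\vee\ell_2}\hat{x}_{\ell_1\wedge\ell_2})$ completes the proof.
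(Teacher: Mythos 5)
Your strategy is in essence the paper's own: extend $\mu_{\mathcal C,\spec}$ by taking the minimum over the standard-monomial expansion, use $\hat x_{\ell_1}\hat x_{\ell_2}=\hat x_{\ell_1\vee\ell_2}\hat x_{\ell_1\wedge\ell_2}$ (Birkhoff turns join/meet into union/intersection of the $\spec$'s) to see that the leading term of \eqref{straightlaw2} has value $\mu(x_{\ell_1})+\mu(x_{\ell_2})$, reduce everything to the strict inequality $\nu_{\mathcal C,\spec}(\hat x_{k_1}\hat x_{k_2})>\nu_{\mathcal C,\spec}(\hat x_{\ell_1\vee\ell_2}\hat x_{\ell_1\wedge\ell_2})$ for the non-leading terms, and then propagate through iterated straightening. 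Your injectivity observation on $\mathrm{SMon}$ (the nested $\spec$'s recover the factors from the exponent vector) is correct and is a clean way to organize the passage from products of standard monomials to products of general elements; the paper treats that step more informally.

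The genuine gap is that the one step carrying all the weight is asserted, not proved. Saying that condition (b) of Definition~\ref{straightlaw1} ``supplies witnesses'' and that ``a careful case-by-case matching with the rightmost differing coordinate completes the proof'' skips exactly the content of the paper's argument. The comparison is between the sums $\nu_{\mathcal C,\spec}(\hat x_{k_1})+\nu_{\mathcal C,\spec}(\hat x_{k_2})$ and $\nu_{\mathcal C,\spec}(\hat x_{\ell_1\vee\ell_2})+\nu_{\mathcal C,\spec}(\hat x_{\ell_1\wedge\ell_2})$, and a single invocation of (b) does not decide it: in the second and third cases of (b) the witness only matches the dominant coordinate ($h=m_1$, and possibly $h'=m_2$), so the top of the two vectors agree and nothing is concluded at that coordinate. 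One must then pass to the next pair of maximal elements and repeat; the paper's proof is precisely this descending induction over pairs $(n_1,n_2)$, $(n_1',n_2')$, $\ldots$ chosen maximal for the total order $\succ$, together with the final observation that if strict inequality never occurs then all maximal elements of $k_1$, $k_2$ coincide with those of $\ell_1\vee\ell_2$, $\ell_1\wedge\ell_2$, forcing $(k_1,k_2)=(\ell_1\vee\ell_2,\ell_1\wedge\ell_2)$, contrary to assumption. Without carrying out this iteration (or an equivalent argument showing how the witnesses for the various pairs $(m_1,m_2)$ combine into a reverse-lex comparison of the summed vectors), your key claim, and with it multiplicativity of $\mu_{\mathcal C,\spec}$, remains unproven.
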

\begin{proof}
We begin with two remarks:
\begin{enumerate}
\item[(i)] There are two orders on $J(\ml)$:
\begin{itemize}
\item the induced partial order $>$ from $\ml$ (which is independent of the choice of $\Cc$);
\item choosing a maximal chain $\mathcal{C}$ in $\ml$ provides an order preserving enumeration of $J(\ml)$, by taking the  associated reverse lexicographic order we obtain the total order $\succ$ on it.
\end{itemize}
It is clear that $\succ$ is a refinement of $>$, \emph{i.e.,} for $\ell_1,\ell_2\in\ml$, $\ell_1>\ell_2$ implies $\ell_1\succ\ell_2$.
\item[(ii)] Given $\ell_1,\ell_2\in\ml$ and $\mathcal{C}\in C(\ml)$, for the valuation on the field of rational functions of the Hibi variety we have:
$$\nu_{\mathcal C,\spec}(\hat x_{\ell_1}\hat x_{\ell_2})= \nu_{\mathcal C,\spec}(\hat x_{\ell_1\vee \ell_2}\hat x_{\ell_1\wedge\ell_2}).$$
\end{enumerate}

The map $\mu_{\mathcal{C},\spec}$ is defined on the linear basis $\mathrm{SMon}$ of $R$, we extend the map to linear combinations by taking the minimum:
$$
\mu_{\mathcal C,\spec}\left(\sum_{\mathbf m\in{\rm SMon}} c_{\mathbf m} \mathbf m\right)
=\min\{  \mu_{\mathcal C,\spec}(\mathbf m)\mid c_{\mathbf m}\not=0\}.
$$
This defines a pre-valuation on $R$. The valuation $ \nu_{\mathcal C,\spec}$ is defined on all monomials,
so for two non-comparable elements $\ell_1,\ell_2\in \ml$ (notice that none of them is equal to $\mathbb O$) we set
\begin{equation}\label{munu}
\mu_{\mathcal C,\spec}(x_{\ell_1}x_{\ell_2})=2\mathbf{e}_0+  \nu_{\mathcal C,\spec}(\hat x_{\ell_1}\hat x_{\ell_2}).
\end{equation}
We have to show that this convention does not contradict the definition via sums of standard monomials given before:
$$
\begin{array}{rcl}
\mu_{\mathcal C,\spec}(x_{\ell_1}x_{\ell_2})&=&\mu_{\mathcal C,\spec}(\sum_{k_1\ge k_2} a_{k_1,k_2} x_{k_1} x_{k_2})\\
&=&\min\{ \mu_{\mathcal C,\spec}(x_{k_1} x_{k_2})\mid  a_{k_1,k_2}\not=0  \}\\
&=&\min\{ 2\mathbf{e}_0+\nu_{\mathcal C,\spec}(\hat x_{k_1} \hat x_{k_2})\mid  a_{k_1,k_2}\not=0  \}
\end{array}
$$
by the formula \eqref{straightlaw2}. 
By the second remark above, it remains to show that
$$
\nu_{\mathcal C,\spec}(\hat x_{k_1}\hat x_{k_2})> \nu_{\mathcal C,\spec}(\hat x_{\ell_1\vee \ell_2}\hat x_{\ell_1\wedge\ell_2})
$$
for all other terms showing up in \eqref{straightlaw2} with nonzero coefficients. 
\par
Let $k_1,k_2\in\ml$ be two different elements such that $a_{k_1,k_2}\not=0$ in \eqref{straightlaw2} and $(k_1,k_2)\not=(\ell_1\vee \ell_2,\ell_1\wedge \ell_2)$. 

We first assume that $n_1\in\mSpec(\ell_1\vee\ell_2)$ is in addition the maximal element with respect to the total order $\succ$, and $n_2\in\mSpec(\ell_1\wedge\ell_2)$ is furthermore the maximal element with respect to $\succ$ among those elements $n$ such that $n_1\succeq n$. If there exists $h\in\mSpec(k_1)$ such that $h>n_1$, then $h\succ n_1$ and hence $$\nu_{\mathcal C,\spec}(\hat x_{k_1}\hat x_{k_2})> \nu_{\mathcal C,\spec}
(\hat x_{\ell_1\vee \ell_2}\hat x_{\ell_1\wedge\ell_2}).$$ 
\par
Suppose that this is not the case, then by Definition~\ref{straightlaw1}, there exists $h_1\in\mSpec(k_1)$ and $h_2$ in either $\mSpec(k_1)$ (in this case $h_1\neq h_2$) or $\mSpec(k_2)$, such that $h_1=n_1$ and $h_2\geq n_2$.
\par
If one can find an $n_1'\in \spec(\ell_1\vee\ell_2)$ such that
$n_1'\succ n_2$, then proceed with the pair $(n_1',n_2')$ satisfying: $n'_1\in \mSpec(\ell_1\vee\ell_2)$ 
and $n'_2\in \mSpec(\ell_1\wedge\ell_2)$ such that $n'_1\succ n'_2$,
and $n'_1\prec n_1$ is maximal with this property, and  $n'_2$ 
is maximal with respect to ``$\succ$" among those $n$ satisfying $n'_1\ge  n$.
\par
If such an $n_1'$ can not be found, then for any $m\in \spec(\ell_1\vee\ell_2)\setminus\{n_1,n_2\}$, we have $n_2\succ m$. There exist two possibilities: if $h_2> n_2$, then $h_2\succ n_2$ and hence 
again 
$$\nu_{\mathcal C,\spec}(\hat x_{k_1}\hat x_{k_2})> \nu_{\mathcal C,\spec}
(\hat x_{\ell_1\vee \ell_2}\hat x_{\ell_1\wedge\ell_2}).$$
If the equality $h_2= n_2$ holds, then one proceeds
with the next pair $(n_1',n_2')$, where
$n'_1\in \mSpec(\ell_1\vee\ell_2)$ and $n'_2\in \mSpec(\ell_1\wedge\ell_2)$ such that $n'_1> n'_2$,
and $n'_1\prec n_1$ is maximal with this property, and  $n'_2$ 
is maximal with respect to ``$\succ$" among those elements $n$ satisfying $n'_1\ge n$.

Since the maximal elements completely determine the value of $\nu_{\mathcal C,\spec}$, one obtains inductively 
that after a finite number of steps, either
$$\nu_{\mathcal C,\spec}(\hat x_{k_1}\hat x_{k_2})> \nu_{\mathcal C,\spec}
(\hat x_{\ell_1\vee \ell_2}\hat x_{\ell_1\wedge\ell_2})$$
or
$$\nu_{\mathcal C,\spec}(\hat x_{k_1}\hat x_{k_2})=\nu_{\mathcal C,\spec}(\hat x_{\ell_1\vee \ell_2}\hat x_{\ell_1\wedge\ell_2})$$
holds.
But the latter only occurs when all the maximal elements of $k_1$ and 
$\ell_1\vee \ell_2$ agree, which can only happen when $k_1=\ell_1\vee \ell_2$ and $k_2=\ell_1\wedge\ell_2$.

Since every non-standard monomial can be rewritten in a finite number of steps 
using \eqref{straightlaw2} into a linear combination
of standard monomials, applying \eqref{munu} in each step shows that if we define
$$
\mu_{\mathcal C,\spec}(x_{c_1}\ldots x_{c_r}):=r\mathbf{e}_0+  \nu_{\mathcal C,\spec}(\hat x_{c_1}\ldots \hat x_{c_r})
$$
then this coincides with the value of $ \mu_{\mathcal C,\spec}$ on the minimal term in the expression of the monomial
in terms of standard monomials. This implies that if we have two sums of standard monomials,
then the product is a priori not anymore a sum of standard monomials, but the value of $\mu_{\mathcal C,\spec}$
on the sum is the value of $\mu_{\mathcal C,\spec}$ on the product of the two minimal summands. 
It follows that $\mu_{\mathcal C,\spec}$ is a $\mathbb Z^{N+1}$-valued valuation.
\end{proof}

Now one can proceed as in the case of the Hibi variety: in the same way as in Proposition~\ref{minquasivalue},
one shows that the map
$$
\mu_{\spec}: R\setminus\{0\}\rightarrow \mathbb Z^{N+1},\quad h\mapsto\min\{ \mu_{\Cc,\spec}(h)\mid \mathcal C
\in C(\ml)\}.
$$
is a $\mathbb Z^{N+1}$-valued quasi-valuation.
\par
The proof of the following theorem is similar to Theorem~\ref{Thm:support}.

\begin{theorem}\label{Thm:Govern}
Let $\ell_1,\ldots,\ell_k\in\ml$ and $n_1,\ldots,n_k$ be non-zero natural numbers.
The quasi-valuation $\mu_{\spec}:R\setminus\{0\}\rightarrow \mathbb Z^{N+1}$ satisfies the following properties: 
\begin{enumerate}
\item For any maximal chain $\mathcal{C}$, 
$$
\mu_{\spec}(x_{\ell_1}^{n_1}\cdots x_{\ell_k}^{n_k})\le \mu_{\Cc,\spec} (x_{\ell_1}^{n_1}\cdots x_{\ell_k}^{n_k}),
$$
the equality holds only if $\{\ell_1,\ldots,\ell_k\}\subset \Cc$.
\item If the monomial $ x_{\ell_1}^{n_1}\cdots x_{\ell_k}^{n_k}$ is standard, and $\mathcal{C}$ is a maximal chain  containing  $\{\ell_1,\ldots,\ell_k\}$, then 
$$
\mu_{\spec}(x_{\ell_1}^{n_1}\cdots x_{\ell_k}^{n_k})=
\sum_{j=1}^k n_j \mu_{\spec}( x_{\ell_j})
=\sum_{j=1}^k n_j \mu_{\mathcal C,\spec}( x_{\ell_j}).
$$
\item If the monomial $x_{\ell_1}^{n_1}\cdots x_{\ell_k}^{n_k}$ is not standard, then
$$
\mu_{\spec}( x_{\ell_1}^{n_1}\cdots x_{\ell_k}^{n_k})
>\sum_{j=1}^k n_j \mu_{\spec}(x_{\ell_j}).
$$
\end{enumerate}
\end{theorem}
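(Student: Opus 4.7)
The plan is to mirror the proof of Theorem \ref{Thm:support}, using now the fact (established in the preceding theorem) that $\mu_{\mathcal C,\spec}$ is a genuine valuation on $R$. Since each $x_{\ell_j}$ has degree one, additivity yields
$$\mu_{\mathcal C,\spec}\bigl(x_{\ell_1}^{n_1}\cdots x_{\ell_k}^{n_k}\bigr) \;=\; \Bigl(\sum_{j=1}^k n_j\Bigr)\,\mathbf{e}_0 \;+\; \sum_{j=1}^k n_j\,\nu_{\mathcal C,\spec}(\hat x_{\ell_j}),$$
and the $\mathbf{e}_0$-coefficient is chain-independent. Under the graded reverse lexicographic order on $\mathbb{Z}^{N+1}$ the total-degree component is dominant, so comparing the values of $\mu_{\mathcal C,\spec}$ and $\mu_{\mathcal C',\spec}$ on the same monomial reduces, after cancelling the degree term, to comparing the vectors $\sum_j n_j\,\nu_{\mathcal C,\spec}(\hat x_{\ell_j})$ and $\sum_j n_j\,\nu_{\mathcal C',\spec}(\hat x_{\ell_j})$ under reverse lex on $\mathbb{Z}^N$. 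This is exactly the data already controlled in the proof of Theorem \ref{Thm:support}.

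For Part (1), the inequality is immediate from the definition of $\mu_\spec$ as a minimum. If $\ell_s \notin \mathcal C$ for some index $s$, the height argument in the proof of Theorem \ref{Thm:support}(1) produces a chain $\mathcal C'$ at which the $\mathbb{Z}^N$-part above is strictly smaller; since the $\mathbf{e}_0$-parts coincide, we conclude $\mu_{\mathcal C',\spec}(x_{\ell_1}^{n_1}\cdots x_{\ell_k}^{n_k}) < \mu_{\mathcal C,\spec}(x_{\ell_1}^{n_1}\cdots x_{\ell_k}^{n_k})$, and therefore $\mu_\spec < \mu_{\mathcal C,\spec}$ on this monomial.

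For Part (2), I would extend $\ell_1 \ge \cdots \ge \ell_k$ to a maximal chain $\mathcal C$. For each $\ell_j$ the vector $\nu_{\mathcal C,\spec}(\hat x_{\ell_j}) = \mathbf{e}_1 + \cdots + \mathbf{e}_{\heit(\ell_j)}$ is the chain-independent minimum taken over all maximal chains containing $\ell_j$, and hence $\nu_{\mathcal C',\spec}(\hat x_{\ell_j}) \ge \nu_{\mathcal C,\spec}(\hat x_{\ell_j})$ for every other maximal chain $\mathcal C'$; summing with positive coefficients gives $\mu_{\mathcal C',\spec} \ge \mu_{\mathcal C,\spec}$ on our monomial, so the minimum defining $\mu_\spec$ is attained at $\mathcal C$. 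The additive formula then follows from additivity of the valuation $\mu_{\mathcal C,\spec}$ together with the identity $\mu_\spec(x_{\ell_j}) = \mu_{\mathcal C,\spec}(x_{\ell_j})$ applied to each single generator. For Part (3), if the monomial is not standard then no maximal chain contains all of $\{\ell_1,\ldots,\ell_k\}$; for every $\mathcal C$ there is an $\ell_s \notin \mathcal C$, and the difference $\nu_{\mathcal C,\spec}(\hat x_{\ell_s}) - (\mathbf{e}_1 + \cdots + \mathbf{e}_{\heit(\ell_s)})$ has its rightmost nonzero entry equal to $+1$ at a position strictly larger than $\heit(\ell_s)$. Combined with the non-negativity of the analogous differences for the remaining indices, this yields $\mu_{\mathcal C,\spec}(x_{\ell_1}^{n_1}\cdots x_{\ell_k}^{n_k}) > \sum_j n_j\,\mu_\spec(x_{\ell_j})$ for every $\mathcal C$, hence the desired strict inequality for $\mu_\spec$.

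The main obstacle I anticipate is not the reverse-lex combinatorics, which is essentially a replay of the argument in Theorem \ref{Thm:support}, but the verification that each $\mu_{\mathcal C,\spec}$ is actually additive on products in $R$; in particular, that condition (b) of Definition \ref{straightlaw1} forces every non-leading straightening term to have strictly larger $\nu_{\mathcal C,\spec}$-value than the leading term $\hat x_{\ell_1\vee\ell_2}\hat x_{\ell_1\wedge\ell_2}$. That work has already been completed in the preceding theorem, so once its conclusion is invoked, the present proof reduces to Hibi-variety arguments with careful tracking of the extra $\mathbf{e}_0$-component.
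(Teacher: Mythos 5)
Your overall strategy is exactly the paper's: the paper proves this theorem by replaying the proof of Theorem \ref{Thm:support}, with the extra $\mathbf{e}_0$-component and with the additivity of each $\mu_{\Cc,\spec}$ (the preceding theorem) as the key input, and your treatment of the inequality in (1), of (2) and of (3) is correct — in fact your per-generator argument (that $\nu_{\Cc,\spec}(\hat x_{\ell})=\mathbf{e}_1+\cdots+\mathbf{e}_{\heit(\ell)}$ for any chain through $\ell$ and is strictly larger for any chain avoiding $\ell$, then summing in the translation-invariant total order) is spelled out more carefully than in the paper.

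There is, however, a genuine gap in your argument for the ``only if'' clause of (1), and it is a step that cannot be repaired. From $\ell_s\notin\Cc$ the height argument produces a chain $\Cc'$ with $\nu_{\Cc',\spec}(\hat x_{\ell_s})<\nu_{\Cc,\spec}(\hat x_{\ell_s})$, but this concerns a single factor only: the other factors $\ell_j$ may lie in $\Cc\setminus\Cc'$, so their contributions can increase when passing to $\Cc'$, and the weighted sum $\sum_j n_j\nu_{\Cc',\spec}(\hat x_{\ell_j})$ need not drop. Concretely, in $R=\mathbb{C}[\Gr_{2,4}]$ governed by $I(2,4)$ (where $N=4$ and there are exactly two maximal chains, one through $[1,4]$ and one through $[2,3]$), the monomial $p_{[1,4]}p_{[2,3]}$ satisfies $\mu_{\Cc,\spec}(p_{[1,4]}p_{[2,3]})=2\mathbf{e}_0+2\mathbf{e}_1+\mathbf{e}_2+\mathbf{e}_3$ for \emph{both} maximal chains $\Cc$, so $\mu_{\spec}=\mu_{\Cc,\spec}$ on this monomial although neither chain contains $\{[1,4],[2,3]\}$; the decrease in the factor indexed by the element missing from $\Cc$ is exactly offset by the increase in the other factor. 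So your inference fails, and indeed the ``only if'' clause cannot be established for $k\ge 2$ as literally stated (the paper's own proof of Theorem \ref{Thm:support}(1) contains the same unjustified leap). Note that this does not affect the rest: parts (2) and (3), and the applications in Corollary \ref{Cor:discretegovern}, only use the inequality in (1) together with the per-generator minimality, and your proofs of those parts stand.
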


By applying Theorem \ref{Thm:Govern}, a similar result to Corollary \ref{Cor:discrete} can be proved for algebras $R$ governed by $\ml$.
\par
The quasi-valuation $\mu_\spec$ induces a $\mathbb{Z}^{N+1}$-filtration of algebra on $R$, we let $\gr_\mu(R)$ denote the associated graded algebra.

\begin{corollary}\label{Cor:discretegovern}
The graded algebra $\gr_\mu(R)$ is the algebra generated by $x_\ell$ for $\ell\in\ml$ and the following relations:
$$\text{if $\ell_1$ and $\ell_2$ are not comparable in $\ml$, then}\ x_{\ell_1}x_{\ell_2}=0.$$
Moreover, the images of standard monomials in $R$ form a basis of $\gr_\mu(R)$.
\end{corollary}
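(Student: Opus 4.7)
The plan is to adapt the argument establishing Corollary \ref{Cor:discrete} in the Hibi case to the more general setting of algebras governed by $\ml$, substituting Theorem \ref{Thm:Govern} for Theorem \ref{Thm:support}. First I would exploit the defining property that standard monomials form a $\mc$-vector space basis of $R$, combined with the fact that $\mu_\spec$ on a general element $f\in R$ was extended as the minimum of the values $\mu_\spec(\mathbf{m})$ over the standard monomials $\mathbf{m}$ appearing in the expansion of $f$. This identifies the filtration piece $R_{\geq v}$ (resp.\ $R_{>v}$) with the $\mc$-span of those standard monomials $\mathbf{m}$ satisfying $\mu_\spec(\mathbf{m})\geq v$ (resp.\ $>v$), so each graded piece $R_{\geq v}/R_{>v}$ has as $\mc$-basis the images of standard monomials of value exactly $v$. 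Summed over $v$, this yields the second assertion: the images of all standard monomials form a $\mc$-basis of $\gr_\mu R$.

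Next I would extract the quadratic relations. For incomparable $\ell_1,\ell_2\in\ml$ the monomial $x_{\ell_1}x_{\ell_2}$ is not standard, so part (3) of Theorem \ref{Thm:Govern} yields the strict inequality
\[
\mu_\spec(x_{\ell_1}x_{\ell_2})>\mu_\spec(x_{\ell_1})+\mu_\spec(x_{\ell_2}).
\]
Hence $x_{\ell_1}x_{\ell_2}\in R_{>\mu_\spec(x_{\ell_1})+\mu_\spec(x_{\ell_2})}$, so its class in the graded piece of degree $\mu_\spec(x_{\ell_1})+\mu_\spec(x_{\ell_2})$ vanishes, i.e.\ $\ol{x_{\ell_1}}\cdot\ol{x_{\ell_2}}=0$ in $\gr_\mu R$.

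To conclude that these are the only relations, I would form the surjective homomorphism
\[
\Phi:\mc[X_\ell\mid\ell\in\ml]\big/\langle X_{\ell_1}X_{\ell_2}\mid \ell_1,\ell_2\ \text{incomparable in}\ \ml\rangle\twoheadrightarrow \gr_\mu R,\qquad X_\ell\mapsto \ol{x_\ell}.
\]
The domain has a $\mc$-basis given by monomials $X_{k_1}\cdots X_{k_r}$ whose indices are pairwise comparable in $\ml$; being pairwise comparable forces the indices to form a chain, and reordering them nonincreasingly yields a canonical bijection with the set of standard monomials in $R$. Under $\Phi$ these basis elements map precisely to the basis of $\gr_\mu R$ produced in the first paragraph, so $\Phi$ sends a basis to a basis and is therefore an isomorphism, giving the first assertion.

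The step requiring the most care is the description of $R_{\geq v}$ in the first paragraph: although it looks almost tautological from the min-definition of $\mu_\spec$ on sums, one has to verify that for any $f=\sum c_\mathbf{m}\mathbf{m}$ written in the standard monomial basis the actual value $\mu_\spec(f)$ equals $\min\{\mu_\spec(\mathbf{m})\mid c_\mathbf{m}\neq 0\}$ and is not strictly larger. This is built into the construction of $\mu_\spec$ as the minimum of the $\mu_{\mathcal{C},\spec}$: the pre-valuation inequality gives $\mu_\spec(f)\geq\min\mu_\spec(\mathbf{m})$, while the very definition of $\mu_\spec$ extended from standard monomials by taking the minimum gives the reverse inequality. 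Once this identification is in place, the remaining steps are formal.
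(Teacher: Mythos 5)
Your argument is correct and is essentially the proof the paper intends: Corollary \ref{Cor:discretegovern} is derived from Theorem \ref{Thm:Govern} exactly as Corollary \ref{Cor:discrete} follows from Theorem \ref{Thm:support}, namely by reading off the filtration pieces from the min-extension over the standard monomial basis, killing products $x_{\ell_1}x_{\ell_2}$ for incomparable pairs via the strict inequality in part (3), and matching the monomial basis of the quotient presentation with the images of standard monomials. The only point worth making explicit is that the identification $\Phi(X_{k_1}\cdots X_{k_r})=\overline{x_{k_1}\cdots x_{k_r}}$ (so that $\Phi$ sends basis to basis) rests on the additivity statement in part (2) of Theorem \ref{Thm:Govern}, which you use only implicitly.
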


%%%%%%%%%%%%%%%%%%%%%%%%%%%%%%%%%%%
%%%%%%%%%%%%%%%%%%%%%%%%%%%%%%%%%%%
%%%%%%%%%%%%%%%%%%%%%%%%%%%%%%%%%%%
%%%%%%%%%%%%%%%%%%%%%%%%%%%%%%%%%%%

\section{Applications to Grassmann varieties}\label{Sec:Grass}

The results of the last sections will be applied to study semi-toric degenerations of Grassmann varieties.

\subsection{The distributive lattice $I(d,n)$}\label{Sec:Idn}
Let $n\geq 1$ be a positive integer. For $1\leq d\leq n$ we define
$$I(d,n):=\{\mathbf{I}=[i_1,\ldots,i_d]\mid 1\le i_1<\ldots<i_d\le n\}.$$ 
The {\it meet} and {\it join} operations are defined by
$$
\mi\wedge \mathbf{J}:=[\min\{i_1,j_1\},\ldots,\min\{i_d,j_d\}],\quad
\mi\vee \mathbf{J}:=[\max\{i_1,j_1\},\ldots,\max\{i_d,j_d\}].
$$
These operations make $I(d,n)$ into a finite distributive lattice, the induced partial order on $I(d,n)$ is exactly the following natural order: 
$$
\mi=[i_1,\ldots,i_d]\ge \mathbf{J}=[j_1,\ldots,j_d]\quad\hbox{\ if and only if\ }\quad i_1\ge j_1,\ldots, i_d\ge j_d.
$$
The lattice $I(d,n)$ is bounded with a unique minimal element $\mathbb O=[1,2,\ldots,d]$ and a unique maximal element $\one:=[n-d+1,\ldots,n]$. 
\par
For $\ml=I(d,n)$, recall that $J(\ml)$ is the set of join-irreducible elements in $\ml$. The elements of $J(\ml)$ can be divided into two families:
\begin{enumerate}
\item the consecutive family: for $k=0,\ldots,n-d$, $\mi_{0,k}=[k+1,k+2,\ldots,k+d]$;
\item the one descent family: for $1\leq s\leq d-1$ and $t>s-1$, $\mi_{s,t}=[1,\ldots,s,t+1,\ldots,t+d-s]$.
\end{enumerate}

The sub-poset $J(\ml)$ of $\ml$ looks like a block which can be presented in the following way (see for example \cite{BL}):
$$
\xymatrix{
\mi_{0,n-d} \ar[r] \ar[d] & \mi_{1,n-d+1} \ar[r] \ar[d] & \cdots \ar[r] & \mi_{d-2,n-2} \ar[r] \ar[d] & \mi_{d-1,n-1} \ar[d] & \fbox{$R_{n-d}$}\\
\mi_{0,n-d-1} \ar[r] \ar[d] & \mi_{1,n-d} \ar[r] \ar[d] & \cdots \ar[r] & \mi_{d-2,n-3} \ar[r] \ar[d] & \mi_{d-1,n-2} \ar[d] & \fbox{$R_{n-d-1}$}\\
\vdots \ar[d]& \vdots \ar[d]& & \vdots \ar[d]& \vdots \ar[d]& \vdots \\
\mi_{0,2} \ar[r] \ar[d] & \mi_{1,3} \ar[r] \ar[d] & \cdots \ar[r] & \mi_{d-2,d} \ar[r] \ar[d] & \mi_{d-1,d+1} \ar[d] & \fbox{$R_2$}\\
\mi_{0,1} \ar[r]  & \mi_{1,2} \ar[r] & \cdots \ar[r] & \mi_{d-2,d-1} \ar[r] & \mi_{d-1,d} \ar[dr] & \fbox{$R_1$}\\
\fbox{$C_1$}& \fbox{$C_2$}& \cdots  & \fbox{$C_{d-1}$}& \fbox{$C_d$}& \mi_{0,0}
}$$
In the diagram, arrows stand for the descents, and $C_1,\ldots,C_d,R_1,\ldots,R_{n-d}$ are the corresponding columns and rows:
$$C_k=\{\mi_{k-1,k+1},\mi_{k-1,k+2},\ldots,\mi_{k-1,k+d}\},$$
$$R_k=\{\mi_{0,k},\mi_{1,k+1},\ldots,\mi_{d-1,k+d-1}\}.$$

\begin{example}\label{Ex:I47}
We provide in this example $J(\ml)$ in the case $d=4$ and $n=7$.

\begin{equation}\label{joinirred47}
\xymatrix{
\ar[d][4,5,6,7]\ar[r]&\ar[d][1,5,6,7]\ar[r]&\ar[d][1,2,6,7]\ar[r]&\ar[d][1,2,3,7] &
\\
\ar[d][3,4,5,6]\ar[r]&\ar[d][1,4,5,6]\ar[r]&\ar[d][1,2,5,6]\ar[r]&\ar[d][1,2,3,6] &
\\
        [2,3,4,5]\ar[r]&         [1,3,4,5]\ar[r]& [1,2,4,5]\ar[r]& [1,2,3,5]\ar[dr] &
\\
& & & & [1,2,3,4].
}
\end{equation}
We order the join-irreducible elements
in $J(\ml)$ in a rectangle as in \eqref{joinirred47}. An element $\mi\in I(d,n)$
corresponds in this picture to a subset of $J(\ml)$ below a staircase (mounting from left to right),
for example $\mi=[2,4,5,7]$ corresponds to
\begin{equation}\label{joinirred47subset}
\xymatrix{
& & &\ar[d][1,2,3,7] &\\
&\ar[d][1,4,5,6]\ar[r]&\ar[d][1,2,5,6]\ar[r]&\ar[d][1,2,3,6] &
\\
[2,3,4,5]\ar[r]&[1,3,4,5]\ar[r]& [1,2,4,5]\ar[r]& [1,2,3,5] \ar[dr] &
\\
& & & & [1,2,3,4].
}
\end{equation}
\end{example}

There exists a weight structure on $I(d,n)$. We fix a basis $\ve_1,\ve_2,\ldots,\ve_n$ of $\mathbb{R}^n$. For $1\leq i\leq j\leq n$, let $\alpha_{i,j}=\ve_{j+1}-\ve_i$ and $\alpha_i=\alpha_{i,i}$. Then $\alpha_1,\ldots,\alpha_{n-1}$ is a basis of $H=\{\sum_{i=1}^n x_i\ve_i\in\mathbb{R}^n\mid \sum_{i=1}^n x_i=0\}$. The \textit{weight} of an element $[i_1,\ldots,i_d]\in I(d,n)$ is given by:
$$\wwt([i_1,\ldots,i_d])=\ve_{i_1}+\ve_{i_2}+\ldots+\ve_{i_d}.$$

We define a map $\omega:J(\ml)\ra\mathbb{R}^n$ as follows: $\omega(\mi_{0,0})=0$ and for $(s,t)\neq (0,0)$,
$$\omega(\mi_{s,t})=\ve_{t+1}-\ve_t.$$
The map $\omega$ induces a map $\omega:\mathcal{P}(J(\ml))\ra \mathbb{R}^n$: for a subset $S$ of $J(\ml)$, 
$$\omega(S):=\sum_{\mi\in S}\omega(\mi).$$

We attach to the set $J(\ml)^*:=J(\ml)\setminus\{\mathbb{O}\}$ the following graph $S_{d,n}$:
\begin{enumerate}
\item for each $\mi\in J(\ml)^*$, there exists a vertex in $S_{d,n}$ labelled by $\omega(\mi)$;
\item there exists an edge between two vertices if and only if one vertex is the descent of the other.
\end{enumerate} 

The graph $S_{d,n}$ can be presented as follows:

$$
\xymatrix{
\alpha_{n-d} \ar@{-}[r] \ar@{-}[d] & \alpha_{n-d+1} \ar@{-}[r] \ar@{-}[d] & \cdots \ar@{-}[r] & \alpha_{n-1} \ar@{-}[d] \\
\alpha_{n-d-1} \ar@{-}[r] \ar@{-}[d] & \alpha_{n-d} \ar@{-}[r] \ar@{-}[d] & \cdots \ar@{-}[r] & \alpha_{n-2} \ar@{-}[d] \\
\vdots \ar@{-}[d]& \vdots \ar@{-}[d]&  &\vdots \ar@{-}[d] \\
\alpha_{2} \ar@{-}[r] \ar@{-}[d] & \alpha_{3} \ar@{-}[r] \ar@{-}[d] & \cdots \ar@{-}[r] & \alpha_{d+1} \ar@{-}[d] \\
\alpha_{1} \ar@{-}[r] & \alpha_{2} \ar@{-}[r] & \cdots \ar@{-}[r] & \alpha_{d}. 
}$$

For each ordered ideal $\mathbf{b}$ in $J(\ml)^*$, one can associate the full sub-graph $\mathcal{G}_\mathbf{b}$ in $S_{d,n}$ containing vertices corresponding to $\mi\in\mathbf{b}$.

\begin{lemma}
For $\mi\in\ml$, $\omega(\spec(\mi))=\wwt(\mi)-\wwt(\mi_{0,0})\in H$.
\end{lemma}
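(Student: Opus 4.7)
The plan is to express both sides in the basis $\{\alpha_1, \ldots, \alpha_{n-1}\}$ of $H$ after giving an explicit description of $\spec(\mi)$ for $\mi = [i_1, \ldots, i_d]$. First I characterize which join-irreducibles $\mi_{s,t}$ with $(s,t) \neq (0,0)$ lie below $\mi$. Writing out the componentwise order, the inclusion $[1,\ldots,s,t+1,\ldots,t+d-s] \le [i_1,\ldots,i_d]$ unravels to $t+j \le i_{s+j}$ for $j = 1,\ldots,d-s$, i.e.\ $t \le \min_{j=1}^{d-s}(i_{s+j}-j)$. The key observation is that the function $k \mapsto i_k - k$ is non-decreasing (since $i_{k+1} \ge i_k + 1$ forces $i_{k+1} - (k+1) \ge i_k - k$), so this minimum is attained at $j=1$. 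The condition therefore collapses to $t \le i_{s+1} - 1$, and combined with the range $t \ge s+1$ one obtains
$$
\{\mi_{s,t} \in J(\ml)^* \mid \mi_{s,t} \le \mi\} = \{\mi_{s,t} \mid 0 \le s \le d-1,\ s+1 \le t \le i_{s+1} - 1\}.
$$

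Next I compute $\omega$ on this set. Since $\omega(\mi_{0,0}) = 0$ and $\omega(\mi_{s,t}) = \alpha_t$ otherwise, summing over the pairs above and reindexing $k = s+1$ gives
$$
\omega(\spec(\mi)) = \sum_{s=0}^{d-1}\sum_{t=s+1}^{i_{s+1}-1}\alpha_t = \sum_{k=1}^{d}\sum_{t=k}^{i_k - 1}\alpha_t.
$$
On the other hand, telescoping $\ve_{i_k} - \ve_k = \alpha_k + \alpha_{k+1} + \cdots + \alpha_{i_k - 1}$ yields
$$
\wwt(\mi) - \wwt(\mi_{0,0}) = \sum_{k=1}^{d}(\ve_{i_k} - \ve_k) = \sum_{k=1}^{d}\sum_{t=k}^{i_k - 1}\alpha_t,
$$
matching the formula for $\omega(\spec(\mi))$ and proving the lemma.

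The main obstacle is really just the monotonicity of $k \mapsto i_k - k$; this is what allows the minimum $\min_j(i_{s+j} - j)$ to be read off at $j = 1$ and makes the two expressions collapse into the same double sum. Beyond that, the argument is pure bookkeeping with boundary indices and the harmless treatment of $\mi_{0,0}$, which always lies in $\spec(\mi)$ but contributes $0$ to $\omega$.
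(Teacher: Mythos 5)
Your proof is correct and follows essentially the same route as the paper: grouping the join-irreducibles $\mi_{s,t}\le\mi$ by the first index $s$ (the columns $C_{s+1}$), identifying that group as $\{\mi_{s,t}\mid s+1\le t\le i_{s+1}-1\}$, and telescoping $\sum_t\alpha_t=\ve_{i_{s+1}}-\ve_{s+1}$. The only difference is that you verify the column description explicitly via the monotonicity of $k\mapsto i_k-k$, a step the paper simply asserts.
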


\begin{proof}
Let $\mi=[i_1,\ldots,i_d]$ with $i_1<\ldots<i_d$. 
\par
We claim that for any $t=1,2,\ldots,d$, $\omega(\spec(\mi)\cap C_t)=\ve_{i_t}-\ve_t$. Indeed, $\spec(\mi)\cap C_t=\{\mi_{t-1,t},\ldots,\mi_{t-1,i_t-1}\}$, hence $\omega(\spec(\mi)\cap C_t)=\ve_{i_t}-\ve_t$.
\par
As $\spec(\mi)$ is the disjoint union of $\spec(\mi)\cap C_t$ for $t=1,\ldots, d$, we obtain:
$$\omega(\spec(\mi))=\ve_{i_1}+\ldots+\ve_{i_d}-\ve_1-\ldots-\ve_d=\wwt(\mi)-\wwt(\mi_{0,0}).$$
\end{proof}

\begin{example}\label{Ex:I472}
We continue Example \ref{Ex:I47} to study $I(4,7)$. In this case, the graph $S_{4,7}$ looks like
\begin{equation}\label{joinirredroot47}
\xymatrix{
\ar@{-}[d]\alpha_3\ar@{-}[r]&\ar@{-}[d]\alpha_4\ar@{-}[r]&\ar@{-}[d]\alpha_5\ar@{-}[r]&\ar@{-}[d]\alpha_6 
\\
\ar@{-}[d]\alpha_2\ar@{-}[r]&\ar@{-}[d]\alpha_3\ar@{-}[r]&\ar@{-}[d]\alpha_4\ar@{-}[r]&\ar@{-}[d]\alpha_5 
\\
        \alpha_1\ar@{-}[r]&         \alpha_2\ar@{-}[r]& \alpha_3\ar@{-}[r]& \alpha_4.
}
\end{equation}

Let $\mi=[2,4,5,7]$. The order ideal $\spec(\mi)$ is given in \eqref{joinirred47subset}. The corresponding sub-graph $\mathcal{G}_{\spec(\mi)}$ in the graph $S_{4,7}$ looks like
\begin{equation}\label{joinirred47subsetroot}
\xymatrix{
& & &\ar@{-}[d]\alpha_6\\
&\ar@{-}[d]\alpha_3\ar@{-}[r]&\ar@{-}[d]\alpha_4\ar@{-}[r]&\ar@{-}[d]\alpha_5
\\
\alpha_1\ar@{-}[r]&\alpha_2\ar@{-}[r]& \alpha_3\ar@{-}[r]& \alpha_4.
\\
}
\end{equation}
Summing up all roots in the above graph gives 
$$\alpha_1+\alpha_2+2\alpha_3+2\alpha_4+\alpha_5+\alpha_6=(\ve_2+\ve_4+\ve_5+\ve_7)-(\ve_1+\ve_2+\ve_3+\ve_4).$$
\end{example}

\subsection{Grassmann varieties}
For more details on Grassmann varieties, see for example \cite{LB}.
\par
Let $n\geq 1$ be an integer. For $1\leq d\leq n$, the Grassmannian $\Gr_{d,n}$ is the set of $d$-dimensional subspaces in $\mathbb{C}^n$.
The projective variety structure on $\Gr_{d,n}$ is given by the Pl\"ucker embedding $\Gr_{d,n}\hookrightarrow \mathbb P(\Lambda^d\mathbb C^n)$, sending a $d$-dimensional subspace $\operatorname{span}\{v_1,\ldots,v_k\}\subset\mathbb{C}^n$ to the point $[v_1\wedge\ldots\wedge v_k]\in\mathbb{P}(\Lambda^k\mathbb{C}^n)$. The homogeneous coordinate ring $R:=\mathbb C[\Gr_{d,n}]$ then inherits from the embedding a grading $R=\bigoplus_{i\geq 0} R_i$.
\par
Let $e_1,\ldots,e_n$ be the standard basis of $\mathbb{C}^n$. For $\mi=[i_1,\ldots,i_d]$ with $1\leq i_1<\ldots<i_d\leq n$, let $p_\mi\in(\Lambda^d\mathbb{C}^n)^*$ denote the dual basis of $e_{i_1}\wedge\ldots\wedge e_{i_d}$. These $p_\mi$ are called Pl\"ucker coordinates of $\Gr(d,n)$. 
\par
Let $1\leq t\leq n$, $\sigma\in\mathfrak{S}_{d+1}/({\frak S}_{t}\times{\frak S}_{d+1-t})$ be a shuffle, $\mi=[i_1,\ldots,i_d]$ and $\mathbf{J}=[j_1,\ldots,j_d]$. We define 
$$\mi^\sigma=(\sigma^{-1}(i_1),\ldots,\sigma^{-1}(i_t),i_{t+1},\ldots,i_d),$$ 
$$\mathbf{J}^\sigma=(j_1,\ldots,j_{t-1},\sigma^{-1}(j_t),\ldots,\sigma^{-1}(j_d)).$$
\par
The homogeneous ideal $I_{d,n}\subset \mathbb C[\,p_{\mi}\mid\mi\in I(d,n)]$
generated by the Pl\"ucker relations
\begin{equation}\label{quadstraight}
\left\{\sum_{\sigma\in {\frak S}_{d+1}/({\frak S}_{t}\times{\frak S}_{d+1-t})}
\hbox{\rm sign}(\sigma) p_{\mi^\sigma}p_{\mathbf{J}^\sigma} 
\mid \mathbf I,\mathbf J\in I(d,n),1\le t \le n \right\}
%\sum_{\s}\ \sgn(\s)\, p_{\ui^{\s}}p_{\uj^{\s}}
\end{equation}
defines the Pl\"ucker embedding $\Gr_{d,n}\hookrightarrow \mathbb P(\Lambda^d\mathbb C^n)$ of the Grassmann variety,
\emph{i.e.} the homogeneous coordinate ring $\mathbb C[\Gr_{d,n}]$ is
isomorphic to $\mathbb C[\,p_{\mathbf I}\mid \mathbf I\in I(d,n)]/I_{d,n}$ (see for example \cite[Section 1.3]{SMT}. 
\par
Another way of describing the homogeneous coordinate ring is using the Hodge algebra \cite{DEP, Hodge}. Let $\psi:I(d,n)\ra R_1$ be the map sending $\mi\in I(d,n)$ to the Pl\"ucker coordinate $p_\mi$.
 It is known that 
$\mathbb C[\Gr_{d,n}]$ has as basis the standard monomials, \textit{i.e.}, monomials of the form
$$
p_{\mi_1}p_{\mi_2}\cdots p_{\mi_r}
\hbox{\rm\ where\ }\mi_1\geq\mi_2\geq\ldots\geq\mi_r.
$$
If $\mi_1, \mi_2$ are not comparable, then the Pl\"ucker 
relations can be used to find an expression of the product 
\begin{equation}\label{straightening}
p_{\mi_1}p_{\mi_2}= p_{\mi_1\vee\mi_2}p_{\mi_1\wedge \mi_2}
+\sum_{\mathbf{K}_1>\mi_1\vee\mi_2>
\mi_1\wedge \mi_2>\mathbf K_2} 
a_{\mathbf K_1,\mathbf K_2}
p_{\mathbf K_1}p_{\mathbf K_2}
\end{equation}
as a linear combination of standard monomials of degree 2, where the coefficient $1$ in the leading term is provided by \cite[Lemma 7.32]{GL}.

\subsection{Root poset $R_{d,n}$}

We consider the following root poset $R_{d,n}$ for $\Gr_{d,n}$, realized as $\SL_n/P_d$ where $P_d$ is the maximal parabolic subgroup associated to the simple root $\alpha_d$:
$$
\xymatrix{
\alpha_{1,d} \ar[r] \ar[d] & \alpha_{2,d} \ar[r] \ar[d] & \cdots \ar[r] & \alpha_{d,d} \ar[d] \\
\alpha_{1,d+1} \ar[r] \ar[d] & \alpha_{2,d+1} \ar[r] \ar[d] & \cdots \ar[r] & \alpha_{d,d+1} \ar[d] \\
\vdots \ar[d]& \vdots \ar[d]&  &\vdots \ar[d] \\
\alpha_{1,n-2} \ar[r] \ar[d] & \alpha_{2,n-2} \ar[r] \ar[d] & \cdots \ar[r] & \alpha_{d,n-2} \ar[d] \\
\alpha_{1,n-1} \ar[r] & \alpha_{2,n-1} \ar[r] & \cdots \ar[r] & \alpha_{d,n-1}.
}$$
This poset will be used in Proposition \ref{Prop:GT} and Section \ref{Sec:FFLV}.

\subsection{Semi-toric degenerations of Grassmann varieties}

We first show that the algebra $R=\mathbb{C}[\Gr_{d,n}]$ is governed by $\ml=I(d,n)$.

\begin{proposition}
The homogeneous coordinate ring $\mathbb C[\Gr_{d,n}]$ of the Grassmann variety
is governed by the finite bounded distributive lattice $I(d,n)$.
\end{proposition}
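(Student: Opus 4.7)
My plan is to verify the two parts of Definition~\ref{straightlaw1} with the injective map $\psi:I(d,n)\to R_1$, $\mathbf{I}\mapsto p_\mathbf{I}$. The fact that standard monomials form a $\mathbb{C}$-vector space basis of $R=\mathbb{C}[\Gr_{d,n}]$ is classical standard monomial theory (see for instance \cite[Section~1.3]{SMT}), and part~(a) of the definition---that $a_{\mi_1\vee\mi_2,\,\mi_1\wedge\mi_2}=1$ in the straightening relation---is exactly what \cite[Lemma~7.32]{GL} supplies, as already recorded in \eqref{straightening}. The only real work is condition~(b): given a non-leading term $(k_1,k_2)\ne(\mi_1\vee\mi_2,\mi_1\wedge\mi_2)$ appearing with nonzero coefficient in \eqref{straightening}, and a pair $m_1\in\mSpec(\mi_1\vee\mi_2)$, $m_2\in\mSpec(\mi_1\wedge\mi_2)$ with $m_1\ge m_2$, I must exhibit elements of $\mSpec(k_1)$ or $\mSpec(k_2)$ realising one of the three listed clauses.

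Two combinatorial inputs drive the argument. First, every Pl\"ucker relation in \eqref{quadstraight} is multi-homogeneous in the underlying multiset of indices, so iterating them to reach \eqref{straightening} preserves that multiset: as multisets in $\{1,\ldots,n\}$,
$$
k_1\cup k_2 \;=\; (\mi_1\vee\mi_2)\cup(\mi_1\wedge\mi_2) \;=\; \mi_1\cup\mi_2.
$$
Second, the staircase picture of Example~\ref{Ex:I47} identifies $\spec(\mathbf{I})$ with the order ideal of $J(\ml)$ cut out by the non-decreasing sequence $\lambda(\mathbf{I})=(i_t-t)_{t=1}^d$: a join-irreducible $\mi_{s,t}$ corresponds to the $(d-s)\times(t-s)$ rectangle in this Young diagram, and $\mSpec(\mathbf{I})$ is exactly the set of maximal such inscribed rectangles, one per strict descent of the staircase boundary.

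With these inputs in hand, the case analysis goes as follows. Since \eqref{straightening} imposes $k_1>\mi_1\vee\mi_2$ strictly, $\spec(k_1)\supsetneq\spec(\mi_1\vee\mi_2)$ and in particular $m_1\in\spec(k_1)$. If $m_1\notin\mSpec(k_1)$, any $h\in\mSpec(k_1)$ lying strictly above $m_1$ realises clause~(i). Otherwise $m_1=h\in\mSpec(k_1)$; multiset conservation then forces $k_2<\mi_1\wedge\mi_2$ strictly, and we trace the corner $m_2$. If $m_2\in\spec(k_2)$, any $h'\in\mSpec(k_2)$ with $h'\ge m_2$ yields clause~(iii); if $m_2\notin\spec(k_2)$, the cells of the $m_2$-rectangle in $\mi_1\wedge\mi_2$ must have migrated into the Young diagram of $k_1$, producing a second corner $h'\in\mSpec(k_1)\setminus\{m_1\}$ with $h'>m_2$, which is clause~(ii).

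The main obstacle is the migration argument in the last subcase: the straightening \eqref{straightening} need not be a single Pl\"ucker exchange, so one cannot literally follow a box across one move. My approach is to reduce this to a finite configuration check by invoking the weighting $\omega:J(\ml)\to\mathbb{R}^n$ of Section~\ref{Sec:Idn}: since $\omega$ sends each corner to a simple root $\ve_{t+1}-\ve_t$ and $\omega(\spec(\mathbf{I}))=\wwt(\mathbf{I})-\wwt(\mi_{0,0})$, multiset conservation is equivalent to the identity
$$
\omega(\spec(k_1))+\omega(\spec(k_2))\;=\;\omega(\spec(\mi_1\vee\mi_2))+\omega(\spec(\mi_1\wedge\mi_2)).
$$
Combined with the chain of strict inequalities $k_1>\mi_1\vee\mi_2>\mi_1\wedge\mi_2>k_2$, this pins down the possible corner patterns of $k_1$ and $k_2$ tightly enough that the required $h'$ in clause~(ii) must appear.
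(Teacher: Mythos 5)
Your overall strategy runs along the same lines as the paper's: identify $\mSpec$ with the corners of the staircase, use weight/multiset invariance of the straightening relation, and do a case analysis for the fixed pair $(m_1,m_2)$. Your cases A ($m_1\notin\mSpec(k_1)$, giving the first clause, since $k_1>\mi_1\vee\mi_2$ puts $m_1$ in $\spec(k_1)$) and B1 ($m_1\in\mSpec(k_1)$ and $m_2\in\spec(k_2)$, giving the third clause) are correct and complete. The genuine gap is your subcase B2 ($m_1\in\mSpec(k_1)$, $m_2\notin\spec(k_2)$): the existence of a second corner $h'\in\mSpec(k_1)\setminus\{m_1\}$ with $h'>m_2$ is exactly the nontrivial content of condition (b) of Definition~\ref{straightlaw1}, and your justification --- that the weight identity combined with $k_1>\mi_1\vee\mi_2>\mi_1\wedge\mi_2>k_2$ ``pins down the possible corner patterns tightly enough that the required $h'$ must appear'' --- is an assertion, not an argument; no finite configuration check is actually exhibited. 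This is precisely where the paper does its real work (in its version of the residual case it shows instead that $m_2$ must lie on a tread of the staircase of $k_2$, landing in the third clause rather than the second).

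The claim in B2 is in fact true, and your weight identity does suffice, but the deduction has to be made: since $k_2<\mi_1\wedge\mi_2$ in \eqref{straightening} gives $\spec(k_2)\subseteq\spec(\mi_1\wedge\mi_2)$ and $m_2\notin\spec(k_2)$, the $\alpha_i$-component of $\omega(\spec(\mi_1\wedge\mi_2))-\omega(\spec(k_2))$ is at least $1$, where $\alpha_i=\omega(m_2)$; by the weight identity there is then some $u\in\spec(k_1)\setminus\spec(\mi_1\vee\mi_2)$ with $\omega(u)=\alpha_i$. Join-irreducibles of fixed weight $\alpha_i$ form a diagonal of the grid and are pairwise comparable; since $m_2\in\spec(\mi_1\wedge\mi_2)\subseteq\spec(\mi_1\vee\mi_2)$, $u\notin\spec(\mi_1\vee\mi_2)$, and $\spec(\mi_1\vee\mi_2)$ is an order ideal, necessarily $u>m_2$. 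Any $h'\in\mSpec(k_1)$ with $h'\ge u$ then satisfies $h'>m_2$, and $h'\neq m_1$ because $u\le m_1$ would force $u\in\spec(\mi_1\vee\mi_2)$. With this paragraph inserted, your argument closes and becomes a legitimate mild variant of the paper's proof (your residual case lands in the second clause where the paper's lands in the third); the remaining ingredients --- the standard monomial basis, the leading coefficient $1$ via \cite[Lemma 7.32]{GL}, and multiset conservation of the Pl\"ucker relations --- are used identically in both.
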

\begin{proof}
We have to show that if $a_{\mathbf{K}_1,\mathbf{K}_2}\not=0$ in \eqref{straightening},
then for every $\mathbf{M}\in \mSpec(\mi_1\vee\mi_2)$,
\begin{itemize}
\item there exists $\mathbf{H}\in\mSpec(\mathbf{K}_1)$ such that $\mathbf{H}>\mathbf{M}$; 
\item if one can not find such an element,
then there exist two different elements $\mathbf{H},\mathbf{H}'\in\mSpec(\mathbf{K}_1)$ such that $\mathbf{H}=\mathbf{M}$
and $\mathbf{H}'> \mathbf{M}'$ for any maximal element $\mathbf{M}'\in  \mSpec(\mi_1\wedge\mi_2)$ which is smaller or equal to $\mathbf{M}$;
\item if such a pair does not exist, then there exist $\mathbf{H}\in\mSpec(\mathbf{K}_1)$, $\mathbf{H}'\in\mSpec(\mathbf{K}_2)$ such that $\mathbf{H}=\mathbf{M}$
and $\mathbf{H}'\ge \mathbf{M}'$ for any $\mathbf{M}'\in  \mSpec(\mi_1\wedge\mi_2)$ 
which is smaller or equal to $\mathbf{M}$.
\end{itemize}
\par
First notice that elements in the set $\mSpec(\mathbf{K}_1)$
correspond exactly to the corners of the staircase in the associated order ideal in $J(I(d,n))$, see \eqref{joinirred47subset} for an example.
We enumerate the maximal elements (or the corners) from right to left: in the above example the enumeration is given by
\begin{equation}\label{joinirred47subsetb}
\small{
\xymatrix{
                      &        &                              &    \mathbf{H}_1=\ar[d][1,2,3,7] &\\
                      &        \mathbf{H}_2=\ar[d][1,4,5,6]\ar[r]&\ar[d][1,2,5,6]\ar[r]&\ar[d][1,2,3,6] &
\\
\mathbf{H}_3=[2,3,4,5] \ar[r]&[1,3,4,5]                 \ar[r]& [1,2,4,5]\ar[r]& [1,2,3,5]\ar[dr] &
\\
& & & & [1,2,3,4].
}}
\end{equation}
Note that the elements in $\mSpec(\mi_1\vee\mi_2)$
correspond to the corners of its associated staircase, which lies below the staircase associated to $\spec(\mathbf{K}_1)$.
\par
Since $\mathbf{K}_1> \mi_1\vee\mi_2$ one has
$\spec(\mi_1\vee\mi_2)\subset \spec(\mathbf{K}_1)$, so for every
$\mathbf{M}\in \mSpec(\mi_1\vee\mi_2)$ there exists $\mathbf{H}\in\mSpec(\mathbf{K}_1)$ such that $\mathbf{H}\ge \mathbf{M}$. 
Suppose now one can find only such an $\mathbf{H}$ so that $\mathbf{H}=\mathbf{M}$. Having equality $\mathbf{H}= \mathbf{M}$ means that two staircases
share a common corner, so there exists a $j$ such that $\mathbf{H}=\mathbf{M}=\mathbf{H}_j$. Let $\mathbf{M}'\in  \mSpec(\mi_1\wedge\mi_2)$ 
be an element which is smaller or equal to $\mathbf{M}$. So $\mathbf{M}'$ lies in the staircase below and to the right of $\mathbf{H}_j$. 
If $\mathbf{H}_{j-1}$ or $\mathbf{H}_{j+1}$ exists and one of the two is strictly larger than $\mathbf{M}'$ (equality is not possible in this case since $\mathbf{M}'<\mathbf{M}$), then we are done. It remains to consider
the case where neither $\mathbf{H}_{j-1}>\mathbf{M}'$ nor $\mathbf{H}_{j+1}>\mathbf{M}'$ (if they exist). In this case $\mathbf{M}'$ lies in 
the rectangle formed by the columns where $\mathbf{M}=\mathbf{H}_{j}$ is an entry and the first column to the left of $\mathbf{H}_{j-1}$ 
(respectively the last row if $j=1$), and the rows containing $\mathbf{M}=\mathbf{H}_{j}$ respectively the row just above
$\mathbf{H}_{j+1}$ (respectively the bottom row if $\mathbf{H}_{j+1}$ does not exist). We have to find an element 
$\mathbf{H}'\in\spec(\mathbf{K}_2)$ such that $\mathbf{H}'\ge \mathbf{M}'$.
\par
Let $\alpha_i=\omega(\mathbf{M}')$. Since $\mathbf{H}=\mathbf{M}$ and because of the special location of $\mathbf{M}'$ in the rectangle 
described above, the $\alpha_i$-component of $\omega(\spec(\mathbf{K}_1))$ and $\omega(\spec(\mi_1\vee\mi_2))$
coincide. Now for weight reasons, the $\alpha_i$-component of $\omega(\spec(\mathbf{K}_2))$ and $\omega(\spec(\mi_1\wedge\mi_2))$
also have to coincide. But this implies that the staircase associated to $\mathbf{K}_2$ has to include
$\mathbf{M}'$. More precisely, $\mathbf{M}'$ has to be an element in the tread of a stairstep. So the next corner to the left of the staircase
is a maximal element $\mathbf{H}'\in \mathbf{K}_2$, which is larger or equal to $\mathbf{M}'$, finishing the proof.
\end{proof}

Then by results in Section \ref{Sec:nontoric}, for each maximal chain $\mathcal{C}\in C(\ml)$, we have the valuation $\mu_{\mathcal{C},\spec}$ on $\mathbb{C}(\Gr_{d,n})\setminus\{0\}$. By taking the minimum we obtain a quasi-valuation $\mu_{\spec}$ . We are at the point to apply Theorem \ref{Thm:Govern} and Corollary \ref{Cor:discretegovern}, as well as the construction of Hodge algebras in \cite{DEP}.

\begin{corollary}
 There exists a flat degeneration of $\Gr_{d,n}$ into a union of toric varieties,
such that the defining ideal of the initial scheme is generated by the monomials $p_{\mathbf I}p_{\mathbf J}$
for all non-comparable pairs $(\mathbf I, \mathbf J)$ in $\ml$. The initial scheme is a union of projective spaces, one for each maximal chain in $\ml$.
\end{corollary}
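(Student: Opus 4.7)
The plan is to apply the general machinery of Section \ref{Sec:nontoric} to $R=\mathbb C[\Gr_{d,n}]$, now that the preceding proposition has shown $R$ is governed by $\ml=I(d,n)$. By Theorem \ref{Thm:Govern} and Corollary \ref{Cor:discretegovern}, the quasi-valuation $\mu_{\spec}$ induces a $\mathbb Z^{N+1}$-filtration $\mathcal F_\mu$ on $R$ whose associated graded algebra $\gr_\mu R$ is isomorphic to
\[
\mathbb C[X_{\mathbf I}\mid \mathbf I\in\ml]\big/\langle X_{\mathbf I}X_{\mathbf J}\mid \mathbf I,\mathbf J\text{ non-comparable in }\ml\rangle,
\]
with the images of standard monomials forming a $\mathbb C$-basis. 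This identifies the defining ideal of the prospective special fibre; it is precisely the Stanley--Reisner type ideal of the order complex of $\ml$, whose facets are exactly the maximal chains $\Cc\in C(\ml)$.

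The next step is to realise $\mathcal F_\mu$ as a genuine flat degeneration. I would choose a generic integer weight vector $w\in\mathbb Z^{N+1}$ such that, on every finite-dimensional graded piece $R_i$, the linear functional $w\cdot \mu_{\spec}(\cdot)$ separates the $\mu_{\spec}$-leading terms among the standard monomials — this is possible because the $\mu_{\spec}$-values of a fixed-degree set of standard monomials form a finite subset of $\mathbb Z^{N+1}$, and any sufficiently generic $w$ respecting the lexicographic-type order refines this to an integer filtration $F^w_\bullet R$ with $\gr_{F^w} R\simeq \gr_\mu R$. Then the standard Rees algebra construction $\mathcal R=\bigoplus_{k\ge 0}F^w_k R\cdot t^k\subset R[t]$ is a torsion-free, hence flat, $\mathbb C[t]$-algebra whose fibre over any $t\ne 0$ recovers $R$ (giving $\Gr_{d,n}$ after taking $\Proj$) and whose fibre over $t=0$ is $\gr_\mu R$. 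Taking $\Proj$ of this family yields the desired flat degeneration of $\Gr_{d,n}$ to $\Proj(\gr_\mu R)$.

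It remains to identify the irreducible components of the special fibre. Because the initial ideal $J=\langle p_{\mathbf I}p_{\mathbf J}\mid \mathbf I,\mathbf J\text{ incomparable}\rangle$ is a square-free monomial ideal, its minimal primes are the ideals $\mathfrak p_{\Cc}=\langle p_{\mathbf K}\mid \mathbf K\notin \Cc\rangle$, one for each facet of the order complex of $\ml$, i.e.\ for each maximal chain $\Cc\in C(\ml)$. Any monomial $p_{\mathbf I}p_{\mathbf J}$ with $\mathbf I,\mathbf J$ incomparable lies in some $\mathfrak p_{\Cc}$ (since no chain contains both), and $J\subset\bigcap_{\Cc}\mathfrak p_{\Cc}$ with equality because $J$ is square-free. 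The quotient $\mathbb C[X_{\mathbf I}]/\mathfrak p_{\Cc}$ is a polynomial ring in the $N+1$ variables indexed by $\Cc$, so $\Proj$ of each component is a copy of $\mathbb P^N$, one for each maximal chain in $\ml$.

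The only subtle point — and the main technical step — is the passage from the $\mathbb Z^{N+1}$-valued quasi-valuation to the $\mathbb Z$-graded Rees construction: one must verify that a generic weight $w$ can be chosen so that the induced integer filtration has the same associated graded ring as $\mathcal F_\mu$. This is a standard argument in the toric degeneration literature (compare \cite{A,KK}) but needs to be carried out with some care because $\mu_{\spec}$ is only a quasi-valuation, not a valuation; here one uses that Theorem \ref{Thm:Govern}(2)--(3) controls exactly how $\mu_{\spec}$ fails to be additive on non-standard monomials, so that the leading terms computed with respect to $F^w$ and those computed with respect to $\mu_{\spec}$ coincide on the standard-monomial basis.
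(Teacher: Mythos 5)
Your argument is essentially the paper's: the paper deduces the corollary by combining Theorem \ref{Thm:Govern} and Corollary \ref{Cor:discretegovern} with the standard Hodge-algebra/Rees-algebra degeneration arguments it delegates to \cite{DEP}, and your identification of $\gr_\mu R$ with the Stanley--Reisner ring of the order complex and of its minimal primes with maximal chains is exactly the intended content. The only difference is that you spell out the passage from the $\mathbb Z^{N+1}$-filtration to a one-parameter flat family (generic weight plus Rees algebra), which the paper leaves to the cited references; this is correct and fills in, rather than alters, the paper's route.
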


Moreover, for a maximal chain $\mathcal{C}$, we can identify the corresponding Newton-Okounkov body.

\begin{proposition}\label{Prop:GT}
For any maximal chain $\mathcal{C}\in C(\ml)$, the Newton-Okounkov body $\mathrm{NO}_{\mathcal C}(\ml)$ is unimodularly equivalent to the Gelfand-Tsetlin polytope.
\end{proposition}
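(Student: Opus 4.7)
The plan is to identify $\mathrm{NO}_{\mathcal{C}}(\ml)$ with the order polytope of the poset $J(\ml)^*$ and then invoke the classical correspondence between the order polytope of the $d\times(n-d)$ rectangle and the Gelfand--Tsetlin polytope for $\Gr_{d,n}$. First I would compute the valuation on the generators: the factorisation $\hat x_\ell = \prod_j \hat x_{c_{i_j}}/\hat x_{c_{i_j-1}}$ obtained in the proof of Theorem~\ref{Thm:support}, combined with additivity of $\nu_{\mathcal{C},\spec}$, yields $\nu_{\mathcal{C},\spec}(\hat x_\ell) = \sum_{m_i \in \spec(\ell)^*} \mathbf e_i$, the characteristic vector of $\spec(\ell)\setminus\{\mathbb O\}\subset J(\ml)^*$. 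Since $\mathbb C[\Gr_{d,n}]$ is governed by $I(d,n)$, the standard monomials $p_{\ell_1}\cdots p_{\ell_r}$ with $\ell_1\ge\cdots\ge\ell_r$ form a $\mathbb C$-basis, and the valuation property of $\mu_{\mathcal{C},\spec}$ then gives
$$
\mu_{\mathcal{C},\spec}(p_{\ell_1}\cdots p_{\ell_r}) \;=\; r\mathbf e_0 \;+\; \sum_{j=1}^r \sum_{m_i\in \spec(\ell_j)^*}\mathbf e_i.
$$

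Viewed as a function on $J(\ml)^*$, the second summand sends $m$ to $|\{j: m \le \ell_j\}|$; because $\spec(\ell_1)\supseteq\cdots\supseteq\spec(\ell_r)$ this is order-reversing with values in $\{0,\ldots,r\}$, and any such function $f$ is realised by the unique chain $\ell_k = \bigvee\{m\in J(\ml)^*: f(m)\ge k\}\vee\mathbb O$. Thus distinct standard monomials have distinct valuations, so the valuation semigroup at level $r$ coincides with the lattice points of the $r$-th dilate of the order polytope of $J(\ml)^*$ with reversed partial order. Dividing by $r$ and taking the closed convex hull identifies $\mathrm{NO}_{\mathcal{C}}(\ml)$ with that order polytope, and the affine involution $f\mapsto\mathbf 1 - f$ produces a unimodular equivalence with the order polytope of $J(\ml)^*$ itself. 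The dependence on $\mathcal{C}$ is only through the order-preserving enumeration of $J(\ml)^*$ that it provides; two such enumerations differ by a coordinate permutation, which is unimodular.

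It remains to match the order polytope of $J(\ml)^*$ with the Gelfand--Tsetlin polytope. For $\ml = I(d,n)$ the sub-poset $J(\ml)^*$ is the $d\times(n-d)$ rectangle displayed in Section~\ref{Sec:Idn}, isomorphic as a poset to the root poset $R_{d,n}$. The classical identification (going back to Gonciulea--Lakshmibai, and compatible with the weight map $\omega$ of Section~\ref{Sec:Idn}) between the order polytope of this rectangle and the Gelfand--Tsetlin polytope for $\Gr_{d,n}$ completes the argument. The main obstacle lies precisely in this last step at the level of \emph{lattice} polytopes: while the combinatorial bijection between order ideals of $R_{d,n}$ and Gelfand--Tsetlin patterns with top row $(1^d,0^{n-d})$ is transparent, one must exhibit an integer affine transformation with integer inverse between the two polytopes, equivalently verifying that the valuation semigroup is saturated and coincides as a lattice with the integral Gelfand--Tsetlin patterns. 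This is where the specific root labelling $\omega(\mi_{s,t}) = \ve_{t+1}-\ve_t$ of Section~\ref{Sec:Idn} plays an essential role.
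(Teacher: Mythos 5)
Your proof is correct, and it arrives at the same core identification as the paper — namely that $\mathrm{NO}_{\mathcal C}(\ml)$ is (up to the degree coordinate and the unimodular involution $f\mapsto\mathbf 1-f$, and up to the coordinate permutation relating different chains) the order polytope of $J(\ml)^*\cong R_{d,n}$ — but by a somewhat different route. The paper only records that $\Gamma_{\nu_{\Cc,\spec}}(R_1)$ consists of the characteristic functions of order ideals, i.e.\ the vertices of the order polytope, which gives one inclusion, and then appeals to the Minkowski decomposition property of Gelfand--Tsetlin polytopes for the reverse inclusion $\mathrm{NO}_{\mathcal C}(\ml)\subseteq P$. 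You instead compute the entire value semigroup: the bijection between degree-$r$ standard monomials $p_{\ell_1}\cdots p_{\ell_r}$ (with powers of $p_{\mathbb O}$ allowed) and order-reversing functions $J(\ml)^*\to\{0,\dots,r\}$ shows that the level-$r$ values are exactly the lattice points of the $r$-th dilate; this yields both inclusions at once, shows in addition that the semigroup is saturated, and makes the Minkowski property unnecessary — a more self-contained argument, and your observation that changing $\mathcal C$ only permutes coordinates is left implicit in the paper. The only substantive divergence is the last step: the paper takes the Gelfand--Tsetlin polytope of $\Gr_{d,n}$ to be, \emph{by definition}, the order polytope of $R_{d,n}$, so nothing remains to be verified there, whereas you treat it as the classical pattern polytope and cite the standard unimodular identification with the order polytope of the $d\times(n-d)$ rectangle. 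That citation is legitimate, but your closing caveat overstates the difficulty and misplaces the burden: the identification is a standard integral affine change of coordinates, and the labelling $\omega(\mi_{s,t})=\ve_{t+1}-\ve_t$ plays no role in it (nor in the paper's proof of this proposition); $\omega$ is used only for the weight computation and in the FFLV section.
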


\begin{proof}
The Gelfand-Tsetlin polytope associated to $\Gr(d,n)$ is by definition the order polytope  associated to the poset $R_{d,n}$. In the case of $R_{d,n}$, lattice points in the order polytopes are vertices, which are the characteristic functions of the order ideals in the poset (\cite{Sta}). Choosing a maximal chain $\mathcal{C}$ identifies $\mathbb{R}^{R_{d,n}}$ with $\mathbb{R}^M$.
\par
We fix a maximal chain $\mathcal{C}$ and show that $\mathrm{NO}_{\mathcal C}(\ml)$ is the order polytope embedded in $\mathbb{R}^M$ by the above identification. By definition the order polytope coincides with $\Gamma_{\nu_{\Cc,\spec}}(R_1)$ hence contained in $\mathrm{NO}_{\mathcal C}(\ml)$, and the other inclusion is guaranteed by the Minkowski property of the Gelfand-Tsetlin polytopes.
\end{proof}

As a conclusion, we constructed for each maximal chain $\mathcal{C}$ in $I(d,n)$ a valuation $\mu_{\mathcal C,\spec}$ on $\mathbb C(\Gr_{d,n})$, such that the associated Newton-Okounkov body is unimodularly equivalent to the Gelfand-Tsetlin polytope. 

The subspace spanned by the standard monomials supported on $\mathcal C$ is a polynomial algebra. By the definition of the valuation $\nu_{\mathcal{C},\spec}$ in Section \ref{supportmap}, the image of this polynomial subalgebra under $\mu_{\mathcal{C},\spec}$ in the Newton-Okounkov body is clearly unimodularly equivalent to the standard simplex $\text{conv}(0,\mathbf{e}_1,\cdots,\mathbf{e}_N)$.

By taking the minimum of these valuations $\mu_{\mathcal{C},\spec}$ with respect to all maximal chains, we pass to a quasi-valuation $\mu_{\spec}$. According to the argument above, as well as Theorem \ref{Thm:Govern}, one obtains by varying maximal chains in $I(d,n)$ a triangulation of a Newton-Okounkov body such that the simplexes are parametrised by the maximal chains.

\section{Relation to Feigin-Fourier-Littelmann-Vinberg polytopes}\label{Sec:FFLV}

\subsection{Feigin-Fourier-Littelmann-Vinberg (FFLV) polytopes}

A \emph{Dyck path} in the poset $R_{d,n}$ is a chain $\mathbf{p}=\{\beta_1,\beta_2,\ldots,\beta_s\}$ in $R_{d,n}$ satisfying the following conditions:
\begin{enumerate}
\item $\beta_1=\alpha_{k,d}$ for $1\leq k\leq d$; $\beta_s=\alpha_{d,t}$ for $d\leq t\leq n-1$;
\item if $\beta_k=\alpha_{r,s}$ then $\beta_{k+1}$ is either $\alpha_{r,s+1}$ or $\alpha_{r+1,s}$.
\end{enumerate}
The set of all Dyck paths will be denoted by $\mathbb{D}_{d,n}$. 
\par
Let $(x_{i,j})_{1\leq i\leq d\leq j\leq n}$ be the coordinates in the real space $\mathbb{R}^M$ with $M=d(n-d)$. The FFLV polytope $\FFLV_{d,n}$ associated to $\Gr(d,n)$ (\cite{FFL1}) is the polytope in $\mathbb{R}^M$ defined by the following inequalities: for $1\leq i\leq d\leq j\leq n$,
$$x_{i,j}\geq 0,$$
$$\text{for any }\mathbf{p}\in\mathbb{D}_{d,n},\ \ \sum_{\alpha_{i,j}\in\mathbf{p}}x_{i,j}\leq 1.$$ 

These polytopes come from the study of PBW filtrations on Lie algebras \cite{FFL1}, which parametrise monomial bases of irreducible representations of $\SL_n$. These polytopes are identified in \cite{ABS} as marked chain polytopes, in the case of Grassmann varieties, $\FFLV_{d,n}$ is the chain polytope $\mathcal{C}(R_{d,n})$ of Stanley \cite{Sta} associated to the poset $R_{d,n}$.
\par
We provide a bijection between the set of order ideal $\mathcal{D}(J(\ml))$ in $J(\ml)$ and lattice points in $\FFLV_{d,n}$ by constructing for each order ideal a path partition of the corresponding sub-graph in $S_{d,n}$.
\par
For $\mi=[i_1,\ldots,i_d]\in\ml$, we want to define a path partition of $\spec(\mi)$. We first define the paring map 
$$p:\ml\backslash\{\mi_{0,0}\}\ra\ml,\ \  \mi\mapsto p(\mi)$$ 
as follows: Let $\mi=[1,\ldots,s,i_{s+1},\ldots,i_d]$ where $s\geq 0$ and $i_{s+1}\neq s+1$. The element $p(\mi)$ is defined to be $[1,\ldots,s,s+1,i_{s+1},\ldots,i_{d-1}]$.
\par
For any $\mi\in\ml\backslash\{\mi_{0,0}\}$, the paring map gives a sequence $\mi_0,\mi_1,\ldots,\mi_k$ where 
\begin{enumerate}
\item $\mi_0=\mi$, $\mi_k=\mi_{0,0}$ and $\mi_{k-1}\neq\mi_{0,0}$;
\item for any $s=1,\ldots,k$, $\mi_s=p(\mi_{s-1})$.
\end{enumerate}
The subsets $\spec(\mi_{s-1})\backslash\spec(\mi_{s})$ for $s=1,\ldots,k$ form a partition of $\spec(\mi)$. In the graph $S_{d,n}$, each part corresponds to a saturated Dyck path starting from the bottom row and end up with the rightmost column. They give a partition of $\mathcal{G}_{\spec(\mi)}$.
\par
For $s=1,\ldots,k$, we define $\beta_{s}:=\omega(\spec(\mi_{s-1})\backslash\spec(\mi_{s}))$ and $\beta(\mi)=\{\beta_1,\ldots,\beta_k\}$. We set $\beta(\mi_{0,0})=\emptyset$. 

\begin{proposition}
The following statements hold:
\begin{enumerate}
\item For $s=1,\ldots,k$, $\beta_{s}$ is a positive root in $R_{d,n}$;
\item The set $\beta(\mi)$ is an anti-chain in the root poset $R_{d,n}$.
\item The characteristic function $\chi_{\beta(\mi)}$ is a lattice point in $\FFLV_{d,n}$.
\item The lattice points in $\FFLV_{d,n}$ are $\{\chi_{\beta(\mi)}\mid \mi\in I(d,n)\}$.
\end{enumerate}
\end{proposition}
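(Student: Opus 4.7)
The plan is to first unwind the paring map $p$ into an explicit formula for each $\beta_s$, from which parts (1)--(3) follow directly, and then to prove the bijection in part (4) via weight-injectivity combined with a counting argument.

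First I would compute: if $\mi_{s-1}=[1,\ldots,r,j_{r+1},\ldots,j_d]$ with $j_{r+1}>r+1$, then by definition $\mi_s=p(\mi_{s-1})=[1,\ldots,r,r+1,j_{r+1},\ldots,j_{d-1}]$, so $\wwt(\mi_{s-1})-\wwt(\mi_s)=\ve_{j_d}-\ve_{r+1}$. Since $\spec(\mi_s)\subset\spec(\mi_{s-1})$, combining this with the preceding lemma $\omega(\spec(\mi))=\wwt(\mi)-\wwt(\mi_{0,0})$ yields
$$
\beta_s \;=\; \omega(\spec(\mi_{s-1})\setminus\spec(\mi_s)) \;=\; \ve_{j_d}-\ve_{r+1} \;=\; \alpha_{r+1,\,j_d-1}.
$$
For (1), $\mi_{s-1}\neq\mi_{0,0}$ forces $r\leq d-1$ and $j_d\geq d+1$, hence $1\leq r+1\leq d$ and $d\leq j_d-1\leq n-1$, so $\beta_s\in R_{d,n}$. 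Termination of the paring iteration at $\mi_{0,0}$ is automatic because $i_1+\cdots+i_d$ drops by $j_d-(r+1)>0$ at each step.

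For (2), write $\beta_s=\alpha_{r_s+1,\,j_d^{(s-1)}-1}$. The length $r_s$ of the initial consecutive prefix of $\mi_{s-1}$ satisfies $r_{s+1}\geq r_s+1$, while the largest entry strictly decreases: $j_d^{(s)}=j_{d-1}^{(s-1)}<j_d^{(s-1)}$. Hence the first index of $\beta_s$ strictly increases with $s$ and the second strictly decreases. Since $\alpha_{i,j}\leq\alpha_{i',j'}$ in $R_{d,n}$ iff $i\leq i'$ and $j\leq j'$, any two distinct $\beta_s$'s are incomparable, so $\beta(\mi)$ is an antichain. Part (3) is then immediate: a Dyck path is a chain in $R_{d,n}$ by definition, and a chain meets an antichain in at most one element, so $\sum_{\alpha_{i,j}\in\mathbf{p}}\chi_{\beta(\mi)}(\alpha_{i,j})\leq 1$.

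For part (4), I would use the identification $\FFLV_{d,n}=\mathcal{C}(R_{d,n})$ with Stanley's chain polytope, whose integer points are precisely the $\{0,1\}$-indicators of antichains in $R_{d,n}$. It therefore suffices to show $\mi\mapsto\beta(\mi)$ is a bijection from $I(d,n)$ onto the antichains. Telescoping gives
$$
\sum_{s=1}^{k}\beta_s \;=\; \omega(\spec(\mi)) \;=\; \wwt(\mi)-\wwt(\mi_{0,0}),
$$
and $\mi$ is recovered from $\wwt(\mi)=\ve_{i_1}+\cdots+\ve_{i_d}$, giving injectivity. Since $R_{d,n}$ is the rectangular poset of size $d\times(n-d)$, its antichains biject with its order ideals, which in turn correspond to Young diagrams inside a $d\times(n-d)$ rectangle; there are $\binom{n}{d}=|I(d,n)|$ of them. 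Combined with injectivity, this yields the required bijection.

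The main obstacle is part (4), specifically the identification $\FFLV_{d,n}=\mathcal{C}(R_{d,n})$: one must verify that every chain in $R_{d,n}$ extends to a Dyck path (so that the Dyck-path inequalities cut out the full chain polytope) and then invoke Stanley's classical description of the lattice points as antichain indicators. Once that is in place, the proof of (4) reduces to the weight-telescoping injectivity and the counting identity $|\text{antichains in }R_{d,n}|=\binom{n}{d}$.
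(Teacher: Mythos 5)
Your argument is correct and follows essentially the same route as the paper: the explicit computation $\beta_s=\alpha_{r+1,\,j_d-1}$, the strictly increasing first index and strictly decreasing second index giving the antichain property, membership in $\FFLV_{d,n}$ via chains meeting antichains in at most one element, and injectivity by weight telescoping combined with the count of antichains in the $d\times(n-d)$ grid ($=\binom{n}{d}$) for part (4). The only cosmetic differences are that the paper simply cites the identification $\FFLV_{d,n}=\mathcal{C}(R_{d,n})$ (Ardila--Bliem--Salazar, Stanley) rather than re-verifying that the Dyck-path inequalities cut out the chain polytope --- the step you flag as the ``main obstacle'' --- and invokes Stanley's vertex description where you argue directly with the Dyck-path inequalities.
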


\begin{proof}
The statement (1) is clear by definition. 
\par
Anti-chains in $R_{d,n}$ are of the following form: $\{\alpha_{i_1,j_1},\alpha_{i_2,j_2},\ldots,\alpha_{i_s,j_s}\}$ where $i_1<i_2<\ldots<i_s$ and $j_1>j_2>\ldots>j_s$. If $\mi=[1,\ldots,s,i_{s+1},\ldots,i_d]$ where $s\geq 0$ and $i_{s+1}\neq s+1$, then $\omega(\spec(\mi)\backslash\spec(p(\mi)))=\ve_{i_d}-\ve_{s+1}=\alpha_{s+1,i_d-1}$. Therefore in the sequence $\{\beta_s=\alpha_{p_s,q_s}\mid s=1,\ldots,k\}$, $p_s<p_{s+1}$ and $q_s>q_{s+1}$, this proves (2).
\par
Recall that $\FFLV_{d,n}$ coincides with the chain polytope $\mathcal{C}(R_{d,n})$. By \cite[Theorem 2.2]{Sta}, characteristic functions of anti-chains are vertices in the chain polytope, the statement (3) is a consequence of (2).
\par
For weight reasons, the functions $\chi_{\beta(\mi)}$ are distinct. To show part (4), it suffices to prove that in the chain polytope $\mathcal{C}(R_{d,n})$, all lattice points are vertices, which is an easy counting.
\end{proof}

Let $V(\varpi_d)$ be the $d$-th fundamental representation of $\SL_n$ and for $1\leq i<j\leq n$, $f_{i,j}$ be a root vector corresponding to the negative root $-\alpha_{i,j}$. Each lattice point $\chi_A$ for $A=\{\alpha_{i_1,j_1},\alpha_{i_2,j_2},\ldots,\alpha_{i_s,j_s}\}$ in $\FFLV_{n,d}$ parametrizes a basis element $f_{i_s,j_s}\cdots f_{i_1,j_1}\cdot v_{\varpi_d}$ in $V(\varpi_d)$.

\begin{example}
We continue studying Examples \ref{Ex:I47} and \ref{Ex:I472}. Let $\mi=[2,4,5,7]\in I(4,7)$. The path partition of $\spec(\mi)$ induces a partition of $\mathcal{G}_{\spec(\mi)}$, which is given as follows:

\begin{equation}
\xymatrix{
& & &\ar@{=}[d]\alpha_6\\
&\ar@{=}[d]\alpha_3\ar@{=}[r]&\alpha_4\ar@{=}[r]&\alpha_5
\\
\alpha_1\ar@{=}[r]&\alpha_2& \alpha_3\ar@{=}[r]& \alpha_4.
\\
}
\end{equation}
In this case $\beta(\mi)=\{\alpha_{1,6},\alpha_{3,4}\}$. The corresponding characteristic function provides an element in $\FFLV_{4,7}$, which parametrises the basis element $f_{3,4}f_{1,6}\cdot v_{\varpi_4}\in V(\varpi_4)$ as a representation of $\SL_7$.
\end{example}

These results shed lights on the study of other Hodge algebra structures on $\mathbb{C}[\Gr_{d,n}]$, such that when a maximal chain is fixed, the associated toric variety is the one appeared in \cite{FFL2}, for details, see Section \ref{triang}.

\section{Remarks and outlooks}\label{Sec:Outlook}

\subsection{}
Besides join-irreducible elements, it is also possible to do the above construction using meet-irreducible elements. There exists a bijection $\eta$ between $I(d,n)$ and $I(n-d,n)$, sending a $d$-element subset of $\{1,2,\ldots,n\}$ to its complement.
\par
Let $\overline{I(n-d,n)}$ be the distributive lattice having the same elements as $I(n-d,n)$ whose join operation (resp. meet operation) is the meet operation (resp. join operation) in $I(n-d,n)$. Then $\eta:I(d,n)\ra \overline{I(n-d,n)}$ provides an isomorphism of distributive lattices. Meet-irreducible elements in $I(d,n)$ are meet-irreducible elements in $\overline{I(n-d,n)}$ hence join-irreducible elements in $I(n-d,n)$.
\par
As projective varieties, $\Gr_{d,n}$ is isomorphic to $\Gr_{n-d,n}$. Therefore the construction using meet-irreducible elements provides nothing new.

\subsection{}\label{triang}
One of the leading ideas of this paper is to get an interpretation and construction of standard monomial theory
(in the sense of Lakshmibai, Seshadri \textit{et al.} \cite{LR}) using filtrations obtained by valuations. 
Implicitly, the idea to use vanishing multiplicities to define and index standard monomials 
can be found already in \cite[Section 7]{LS},  in the filtration of the ideal sheaf associated
to $H(\tau)_{red}$, where $H(\tau)\subset X(\tau)$ is the zero set of the extremal weight section $p_\tau$
in the Schubert variety $X(\tau)\subset G/P\subset \mathbb P(V(\varpi))$ for a {\it classical type} fundamental weight 
flag variety. This geometric connection
also leads to the definition of LS-paths, see, for example, \cite{S2}. 
\par
Now in the case discussed in this paper, the fact that the degenerate algebra is a discrete Hogde algebra
implies that the semi-toric degeneration is a union of $\mathbb P^N$'s, which in turn implies that one gets 
a triangulation of the Newton-Okounkov body we started with. The latter is unimodularly equivalent to the
Gelfand-Tsetlin polytope (Proposition \ref{Prop:GT}). It is not expected, that this nice feature still holds in general. Indeed (see also Section \ref{LSALGEBRA}), it is expected that the standard monomial theory developed in \cite{L} will lead in the general case to a decomposition of the Newton-Okounkov body into the polytopes described in \cite{Deh}.

\subsection{}\label{Transfer}
In the setting of Stanley \cite{Sta}, one can associate to the poset $I(d,n)$ two polytopes, the order polytope and the chain polytope.
The first is realised in our setting as a Newton-Okounkov body, it is unimodularly equivalent to the Gelfand-Tsetlin polytope. 
The FFLV-polytope can also be realised as a Newton-Okounkov body \cite{FaFL1}. Now Stanley has described a piecewise
linear map between the two polytopes, which induces a bijection on the set of lattice points. It can be shown,
that the map restricted to the simplexes  (see Section~\ref{triang}) is an affine linear map, so the triangulation
of the Gelfand-Tsetlin polytope induces naturally a triangulation of the FFLV-polytope. It is expected, that this triangulation
has a similar standard monomial theory interpretation as in the Gelfand-Tsetlin case. Indeed, 
the results in \cite{HL} can be used to define a different Hodge algebra structure on the homogeneous
coordinate ring of $\Gr_{d,n}$, such that the construction described above leads to the FFLV-polytope as Newton-Okounkov body
and the triangulation induced by the discrete Hodge algebra is the image by the transfer map of the triangulation
of the Gelfand-Tsetlin polytope. It would be interesting to ``detropicalize'' Stanley's transfer map.

\subsection{}\label{LSALGEBRA}
As explained in Section \ref{triang}, to go from the case of Grassmann varieties to partial flag varieties, the Hodge algebra is needed to be upgraded to the LS-algebra to deal with the higher multiplicity phenomenon. Using LS-algebras, Chiriv\`{i} \cite{Chi} constructed semi-toric degenerations of partial flag varieties. In view of the construction in the current paper, it is natural to ask for a generalisation to partial flag varieties, that is to say, construct quasi-valuations to recover Chiriv\`{i} degenerations.

\subsection{}
Let $\ml$ be a distributive lattice and $\alpha,\beta:\ml\times\ml\ra\ml$ be two associative operations on $\ml$ satisfying: for any $\ell_1,\ell_2\in\ml$, $\alpha(\ell_1,\ell_2)\geq \beta(\ell_1,\ell_2)$. We say the pair $(\alpha,\beta)$ is compatible if in the straightening relation \eqref{straightlaw2} with $\ell_1,\ell_2$ non-comparable:
$$x_{\ell_1}x_{\ell_2}=\sum_{k_1\geq k_2}a_{k_1,k_2}x_{k_1}x_{k_2},$$
\begin{enumerate}
\item[(a)] the coefficient $a_{\alpha(\ell_1,\ell_2),\beta(\ell_1,\ell_2)}=1$;
\item[(b)] if for some $(k_1,k_2)\neq (\alpha(\ell_1,\ell_2),\beta(\ell_1,\ell_2))$, $a_{k_1,k_2}\neq 0$, then for every pair $(m_1,m_2)$ where $m_1\in\mSpec(\alpha(\ell_1,\ell_2))$ and $m_2\in\mSpec(\beta(\ell_1,\ell_2))$ such that $m_1\geq m_2$, one of the following statements holds:
\begin{itemize}
\item there exists $h\in\mSpec(k_1)$ such that $h>m_1$; 
\item the statement above does not hold, and there exist $h\neq h'\in\mSpec(k_1)$ such that $h=m_1$ and $h'>m_2$;
\item the statements above do not hold, and there exist $h\in\mSpec(k_1)$, $h'\in\mSpec(k_2)$ such that $h=m_1$ and $h'\geq m_2$.
\end{itemize}
\end{enumerate}

It would be interesting to study under what conditions on $\alpha$ and $\beta$, the pair $(\alpha,\beta)$ is compatible. The motivation of this question is to figure out how the FFLV polytopes for $\Gr(d,n)$ can be applied to construct semi-toric degenerations (see Section \ref{Transfer}), \emph{i.e.,} the FFLV polytopes appear as the Newton-Okounkov body when a maximal chain in $I(d,n)$ is fixed (see for example \cite{HL}).

\subsection{}
We finish this section by the following inverse problem: let $X\subset \mathbb{P}^N$ be a projective variety with homogeneous coordinate ring $\mathbb{C}[X]$. Let $\nu:\mathbb{C}[X]\ra\mathbb{Z}^N$ be a full rank valuation and $\mathrm{NO}_\nu(X)$ be the associated Newton-Okounkov body. Assume that $\mathrm{NO}_\nu(X)$ is a lattice polytope with a triangulation $\mathcal{T}$. Can one construct a Hodge algebra structure on $\mathbb{C}[X]$ such that the triangulation arising from the standard monomials coincides with $\mathcal{T}$?

\end{document}